\newtheorem{theorem}{Theorem}
\newtheorem{definition}[theorem]{Definition}
\newtheorem{lemma}[theorem]{Lemma}
\newtheorem{remark}[theorem]{Remark}
\newtheorem{hypothesis}[theorem]{Hypothesis}
\newenvironment{proof}[1][Proof]{\textbf{#1.} }{\ \rule{0.5em}{0.5em}}
\def\RR{\mathbb{R}}
\def\E{\mathbb{E}}
\begin{document}

\title{ Flow of diffeomorphisms for SDEs
with unbounded
H\"older continuous drift
}
\author{F. Flandoli$^{1}$,
M. Gubinelli$^{2}$, E. Priola$^{3}$\\{\small {(1) Dipartimento di
Matematica Applicata ``U. Dini'', Universit\`a di Pisa,
Italia}}\\{\small { (2) CEREMADE  (UMR 7534), Universit\'e Paris
Dauphine, France}}\\{\small { (3) Dipartimento di Matematica,
Universit\`a di Torino, Italia }}}\maketitle
\begin{abstract}
We consider a SDE with a smooth multiplicative non-degenerate noise
and a possibly unbounded H\"{o}lder continuous drift term. We prove
existence of a global flow of diffeomorphisms by means of a special
transformation of the drift of It\^{o}-Tanaka type. The proof
requires non-standard elliptic estimates in H\"{o}lder spaces.
 As an application of the
stochastic flow, we  obtain
a Bismut-Elworthy-Li type formula for the first derivatives of
 the associated  diffusion semigroup.
\end{abstract}

\section{Introduction}

In this paper we study the existence of a global stochastic flow of
diffeomorphisms for the following stochastic differential equation in
$\mathbb{R}^{d}$
\begin{equation}
dX_{t}^{x}=b\left(  X_{t}^{x}\right)
dt+\sum_{i=1}^{k}\sigma_{i}\left( X_{t}^{x}\right)  dW_{t}^{i},\quad
t\geq0,\quad X_{0}^{x}=x, \label{SDE}
\end{equation}
where $W_{t}=(W_{t}^{1},...,W_{t}^{k})$ is a standard Brownian
motion in ${\mathbb{R}}^{k}$. We assume that the diffusion
coefficients
$\sigma_i : \RR^d \to \RR^d$, $ i=1,\dots,k$,   are smooth and
 non-degenerate and we allow
 the drift term
$b: \RR^d \to \RR^d $ to  be unbounded and H\"older continuous.

Following a common language, we say that equation (\ref{SDE}) is
\textit{weakly complete} if there exists a unique global strong solution for
every $x\in\mathbb{R}^{d}$, and that it is \textit{strongly complete} if there
exists a global stochastic flow of homeomorphisms.
If the coefficients $b$ and
$\sigma_{i}$ are globally Lipschitz, then one has strong completeness (see
\cite{K} and \cite{K1}).

Weak completeness is true under much weaker assumptions: for
instance, when the coefficients $b$ and $\sigma_{i}$ are locally
Lipschitz continuous and have at most linear growth.  In dimension one,
these assumptions also imply strong completeness (see \cite{K}  and
\cite{K1})  but in dimension larger than one there are
counterexamples, from \cite{LS}, even in the case of smooth bounded
coefficients.  These examples indicate that some form of global
control at infinity on the increments of the coefficients is
necessary. For (at least) locally Lipschitz coefficients, there are
indeed positive results of strong completeness (see \cite{FIZ},
\cite{L}, \cite{MS}).

Strong completeness for non-locally Lipschitz coefficients can be
established replacing the global Lipschitz condition on the
coefficients with  global log-Lipschitz type conditions (see
\cite{RZ}, \cite{Za1}, \cite{FZ}, \cite{FL}).  Such log-Lipschitz
conditions are stronger than the  H\"older continuity.

Many papers prove weak completeness for SDEs with non-locally
Lipschitz continuous coefficients assuming a non-degenerate
diffusion matrix $\sigma$. First papers in this direction were
\cite{Zv} and \cite{V} in which the method of  the so called
Zvonkin's transformation was introduced. More recent papers dealing
with such approach are \cite{GM}, \cite{Kry-Ro}, \cite{Za},
\cite{Za2}  (see also the references therein).
%
In the case of non-degenerate additive noise and time dependent drift $b$, the
most advanced result (but see
%
also the 1-dimensional results reported in \cite{RY}) is \cite{Kry-Ro}; in
such paper it is shown that it is sufficient to assume that $b\in L^{q}\left(
0,T;L^{p}_{loc}\left(  \mathbb{R}^{d}\right)  \right)  $ with $\frac{d}%
{p}+\frac{2}{q}<1$, $p \ge2$ and $q>2$, plus a non-explosion condition, to get
weak completeness. This result has been generalized in \cite{Za}  to cover
also the case in which $\sigma$ is variable,  time-dependent and
non-degenerate. We do not know about strong completeness under such weak assumptions.

%

The contribution of the present paper is to prove strong
completeness for SDEs with ``locally uniformly $\theta$-H\"{o}lder
continuous'' drift $b$, for some $\theta\in\left(  0,1\right)  $
(see \eqref{hol}), removing boundedness of $b$ or additional
regularity assumed in previous works. Also, we allow non-degenerate,
bounded
and  $C^{3}_b (\RR^d, \RR^d) $-diffusion coefficients
$(\sigma_i)_{i=1,\dots,k}$. We point out that
 our result seems to be new
even in the case of constant and non-degenerate
$(\sigma_i)_{i=1,\dots,k}$.

In spite of the fact that $b$ is not even differentiable, under the
previous assumptions, we construct a stochastic flow of
$C^{1}$-diffeomorphisms (see Theorem \ref{th:flow1}) using the
approach of \cite{FGP} rather than the Zvonkin's transformation
method used in the above mentioned works on strong completeness (we
compare the two methods in Section 3).

In \cite{FGP} in order to study a linear stochastic transport
equation with a  {\it bounded vector field} $\tilde b (t,x)$ which
is H\"older continuous  in $x$, uniformly in time,  we have showed
that if in \eqref{SDE} $\sigma= (\sigma_i)$ is constant and
non-degenerate and $b  = \tilde b$,  then there exists a stochastic
flow of $C^{1}$-diffeomorphisms. This  result can be extended
without difficulties to the case in which $\sigma$  is not constant,
bounded, non-degenerate,  and time-dependent  (see \cite{Za2} where
this case  is investigated  by the Zvonkin's transformation or
Remark \ref{vi} where we show such result following the approach of
\cite{FGP}).

In the present situation, since our $b$ is {\it unbounded,}
we need new
global regularity results in H\"older spaces  for
the solution $u$ of the  elliptic equation
\begin{equation}
\lambda u (x) - \frac{1}{2} \mathrm{Tr} (a(x) D^{2}u (x)) - b(x)\cdot D u(x)=
b(x),\;\;\; x \in\mathbb{R}^{d},
\end{equation}
to be interpreted componentwise, where $\lambda>0$ is large enough,
$a(x) = \sigma(x) \sigma^{*} (x)$ ($\sigma^{*}(x)$ denotes the
adjoint matrix of $\sigma(x)$).  The study of this equation will be
the subject of Section 2 of the present paper. The required
estimates are  not covered  by recent papers dealing with elliptic
and parabolic equations with  unbounded coefficients  (compare with
\cite{Ce}, \cite{BL}, \cite{KP} and the references therein). To
obtain such result we prove a crucial Lemma \ref{semi}
concerning
estimates on the derivatives of the associated diffusion semigroup
  when it is applied to \textit{unbounded}
functions $f$; in its proof we also use an argument from the proof
of \cite[Theorem 3.3]{Pr2}.  In Remark \ref{fine} we
 show a possible extension of our Theorem \ref{th:flow1}
 to the case in which
$b$ and $\sigma$ are  time-dependent.

We finish the paper by showing that a Bismut-Elworthy-Li formula
holds for the diffusion semigroup associated to \eqref{SDE}
 (see
Theorem \ref{bismut}). Under
the poor regularity of $b$ assumed here, this result is new.
Bismut-Elworthy-Li formula requires a suitable form of
differentiability of the solution of \eqref{SDE} with respect to the
initial condition $x$;\ we have this result as a byproduct of our
Theorem \ref{th:flow1} on existence of a differentiable stochastic
flow.

\paragraph{Notations and assumption}

%
The euclidean norm in any $\mathbb{R}^{k}$, $k\geq1$, will be denoted by
$|\cdot|$ and its inner product by $\cdot$ or $\langle\cdot,\cdot\rangle$.
For $\theta\in(0,1)$, we define the  set $C_{}^{\theta}(\mathbb{R}%
^{d};\mathbb{R}^{k})$, $k,\,d\geq1$, as set of all vector-fields
$f:\mathbb{R}^{d}\rightarrow\mathbb{R}^{k}$ for which%
\begin{equation} \label{hol}
\lbrack f]_{\theta}:=\sup_{x\neq y\in\mathbb{R}^{d},\left|
x-y\right|  \leq 1}\frac{|f(x)-f(y)|}{|x-y|^{\theta}}<\infty.
\end{equation}
These are the {\it ``locally uniformly $\theta$-H\"{o}lder
continuous''} vector fields mentioned in the introduction. The
function $f\left( x\right) =|x|^{\theta}$ is a classical example. We
let
\begin{equation} \label{vai}
[ f]_{\theta,1}:=\sup_{x\neq
y\in\mathbb{R}^{d}}\frac{|f(x)-f(y)|}{(|x-y|^{\theta
}\vee|x-y|)}<\infty,
\end{equation}
where $a\vee b=\max(a,b)$, for $a,b\in\mathbb{R}$. By a simple
argument we have $ [f]_\theta \le [f]_{\theta,1} \le 2 [f]_{\theta}
$, so in particular  functions in $ C_{}^{\theta}(\mathbb{R}%
^{d};\mathbb{R}^{k})$ have {\it at most linear growth.} The set
$C_{}^{\theta}(\mathbb{R}^{d};\mathbb{R}^{k})$ becomes a Banach
space with respect to the norm
\[
\Vert f\Vert_{\theta}=\big\|\,{(1+|\cdot|)^{-1}}\,{f(\cdot)}\big\|%
_{0}+[f]_{\theta},
\]
where $\Vert\cdot\Vert_{0}$ denotes the supremum norm over
$\mathbb{R}^{d}$. We say that $f\in
C_{}^{n+\theta}(\mathbb{R}^{d};\mathbb{R}^{k})$, $n \ge 1$,
 if $f\in
C_{}^{\theta}(\mathbb{R}^{d};\mathbb{R}^{k})$ and moreover, for all
$i=1,\dots,n,$ the Fr\'{e}chet derivatives $D^{i}f$ are
\textit{bounded} and $\theta$-H\"{o}lder continuous. Define the
corresponding norm as
\begin{equation}
\Vert f\Vert_{n+\theta}=\Vert f\Vert_{\theta }+\sum_{i=1}^{n}\Vert
D^{i}f\Vert_{0}+[D^{n}f]_{\theta}.
\end{equation}
If $\mathbb{R}^{k}=\mathbb{R}$, we simply write
$C_{}^{n+\theta}(\mathbb{R}^{d})$ instead of
$C_{}^{n+\theta}(\mathbb{R}^{d};\mathbb{R})$, $n\geq0$.
$C_{b}^{n+\theta}(\mathbb{R}^{d};\mathbb{R}^{k})$ is the subspace of
$C_{}^{n+\theta}(\mathbb{R}^{d};\mathbb{R}^{k})$, consisting of all
bounded functions of
$C_{}^{n+\theta}(\mathbb{R}^{d};\mathbb{R}^{k})$.
In particular, $C^{\theta}_b(\mathbb{R}^{d})$ is the usual
Banach space of all real bounded and $\theta$-H\"older continuous functions
on $\RR^d$ (cf. \cite{Kr}).  $C_{b}^{n}(\mathbb{R}^{d};\mathbb{R}^{k})$
is the space of all
bounded functions from $\RR^d$ into $\RR^k$ having also bounded
derivatives up to the order $n \ge 1$ and we set
$C_{b}^{n}(\mathbb{R}^{d};\mathbb{R}) = C_{b}^{n}(\mathbb{R}^{d})$.
Finally,  we say that $f:\mathbb{R} ^{d}\rightarrow
\mathbb{R}^{d}$ is of class
$C^{n,\alpha }$, $n \ge 1 $, $\alpha \in (0,1)$, if $f$ is
continuous on $\mathbb{R}^{d}$, $n$-times differentiable
and the derivatives up to the order $
n$ are  $\alpha$-H\"{o}lder continuous on each compact set of
$\RR^d$.

\vskip1mm Throughout the paper we will assume a fixed stochastic basis with a
$d$-dimensional Brownian motion $\left(  \Omega,\left(  \mathcal{F}{}%
_{t}\right)  ,{}\mathcal{F},P,\left(  W_{t}\right)  \right)  $ to be given.
Denote by $\mathcal{F}_{s,t}$ the  completed $\sigma$-algebra generated by
$W_{u}-W_{r}$, $s\leq r\leq u\leq t$,  for each $0\le s<t$.

On equation \eqref{SDE}, we will consider the following assumptions.

\begin{hypothesis}
\label{hy1} There exists ${\theta}\in(0,1)$ such that $b\in
C_{}^{\theta }(\mathbb{R}^{d};\mathbb{R}^{d})$.

\end{hypothesis}
\begin{hypothesis}
\label{hy2}  The diffusions coefficients
 $\sigma_{i}: \mathbb{R}^d \to \mathbb{R}^d$, $i=1, \ldots, k$,
    are bounded functions of class
$C^{3 }_b (\mathbb{R}^d, \mathbb{R}^d).$
\end{hypothesis}

\begin{hypothesis}
\label{hy3} Consider the $d \times k$ matrix $\sigma(x) =
(\sigma_i(x))$, and
its adjoint matrix $\sigma^*(x)$,
 $x \in \mathbb{R}^d$;  we assume that,
for any $x \in \mathbb{R}^d$, there exists the inverse
 of $a(x)= \sigma (x)\sigma^* (x)$ and
\begin{equation}\label{si}\| a^{-1}\|_0  =  \sup_{x \in \mathbb{R}^d}
\| a^{-1} (x) \| < \infty
\end{equation}
\end{hypothesis}
($\| a^{-1} (x) \|$ denotes the Hilbert-Schmidt norm of
 the $d \times d$ symmetric matrix $a^{-1} (x)).$

\section{Regularity results for   the associated elliptic problem
}

\subsection{Estimates on the derivatives of the diffusion semigroup}

Here, we consider the SDE \eqref{SDE}, assuming that
$\sigma$ satisfies Hypotheses \ref{hy2} and \ref{hy3} and
 imposing
in addition that
\begin{equation} \label{hyy}
b \in C^3(\mathbb{R}^d;\mathbb{R}^d) \;\;
\text{with all bounded derivatives
up to the third order.}
\end{equation}
Clearly this is  stronger
 than   Hypothesis   \ref{hy1} but  $b$ is
  not assumed to be bounded.

\smallskip Let $(P_t)$ be the corresponding diffusion semigroup,
i.e., for any $g: \mathbb{R}^d \to \mathbb{R}$ Borel and bounded,
$$
P_t g (x) = \mathbb{E} [g(X_t^x)],\;\;\; x \in\mathbb{R}^d,\;\; t
\ge 0,
$$
where $(X_t^x)$ is the unique strong solution to \eqref{SDE}
 under \eqref{hyy}.

In our next result,
 we will prove  estimates on the spatial derivatives of
$P_t f$, $t>0,$ assuming that $f \in C^{\theta} (\mathbb{R}^d)$.
 To this purpose,
 we will use  the so-called Bismut-Elworthy-Li
 formula  (see \eqref{bism})
  for the spatial derivatives of $P_t f$ (cf. \cite{EL}).

Let us comment on  such  formula. Probabilistic formulae for
the spatial derivatives of Markov semigroups    have
been much studied
  for different classes of
  degenerate and non-degenerate
 diffusion processes  even with jumps (see \cite{B}, \cite{KS},
 \cite{EL}, \cite{DZ1}, \cite{Ce}, \cite{Fu} \cite{Pr1}, \cite{Pr2},
 \cite{Za2}
 and the references therein). The martingale
  approach of \cite{EL} mainly
 works for
  non-degenerate semigroups (but see also \cite[Chapter 3]{Ce}
  and  \cite{Za2}); it    has been also used for
  some infinite dimensional diffusion processes (see  \cite{DZ1}
 and   \cite{Ce}).  On the other hand, in case of degenerate
 diffusion semigroups,  more complicate formulae
  for the  derivatives
   can be established by
  Malliavin Calculus  (see \cite{B},
  \cite{KS}, \cite{Fu} and
  \cite{Pr1}).
 Some applications to Mathematical Finance  are given in  \cite{F}.

\vskip 2mm  The next lemma is of independent interest
since the function
$f$ in \eqref{gra} is not assumed to be bounded
(compare with \cite[Chapter
1]{Ce} and \cite[Chapter 6]{BL}).

\begin{lemma} \label{semi}
Assume Hypotheses \ref{hy2} and
 \ref{hy3} and condition  \eqref{hyy}.
  There exist constants $c_j>0$, $M_j>0$,
$j=1, 2,3$ ($c_j$ and $M_j$ depends
 on $\theta$, $\| a^{-1}\|_0$, $d,$ $\| \sigma\|_0$ and on the supremum
 norms of derivatives of $\sigma$ and $b$ up to the order $j$),
     such that,  for any $f \in C^{\theta} (\mathbb{R}^d)$, $t>0$,
it holds
\begin{equation} \label{gra}
\| D^j P_t f \|_0 \le    M_j  [f]_{\theta }  \,
\frac{e^{c_j  \,  t}}{t^{(j -\theta)/2}},\;\;\; t>0, \;\; \text{for}
\;\; j=1, 2,3.
\end{equation}
\end{lemma}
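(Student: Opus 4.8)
The plan is to bootstrap from the case $j=1$ using the semigroup property, and to handle $j=1$ directly via the Bismut–Elworthy–Li formula together with a careful treatment of the unboundedness of $f$. For $j=1$, I would start from the BEL identity
\[
D_h (P_t f)(x) = \frac{1}{t}\,\mathbb{E}\Big[ f(X_t^x)\int_0^t \langle \sigma^{-1}(X_s^x)\,D_h X_s^x,\, dW_s\rangle\Big],
\]
valid for $f$ bounded and $C^1$, where $D_h X_s^x$ is the derivative of the flow in the direction $h$ (this uses Hypotheses \ref{hy2}, \ref{hy3}, and \eqref{hyy} so that $b,\sigma\in C^3$ with bounded derivatives, guaranteeing a differentiable flow). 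The key trick to exploit H\"older regularity is to subtract a constant: since $D_h(P_t f) = D_h(P_t(f-f(x)))$, I replace $f(X_t^x)$ by $f(X_t^x) - f(x)$ in the stochastic integral term, which is legitimate because $\mathbb{E}[\int_0^t \langle \sigma^{-1} D_h X_s^x, dW_s\rangle] = 0$. Then by Cauchy–Schwarz,
\[
|D_h(P_t f)(x)| \le \frac{1}{t}\,\big(\mathbb{E}|f(X_t^x)-f(x)|^2\big)^{1/2}\,\Big(\mathbb{E}\Big|\int_0^t \langle \sigma^{-1}(X_s^x) D_h X_s^x, dW_s\rangle\Big|^2\Big)^{1/2}.
\]

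For the first factor I use $[f]_{\theta,1}$-control: $|f(X_t^x)-f(x)| \le [f]_{\theta,1}(|X_t^x-x|^\theta \vee |X_t^x-x|)$, hence $\mathbb{E}|f(X_t^x)-f(x)|^2 \lesssim [f]_\theta^2\,(\mathbb{E}|X_t^x-x|^{2\theta} + \mathbb{E}|X_t^x-x|^2)$. Here is where the unboundedness of $b$ enters: I need moment bounds of the form $\mathbb{E}|X_t^x - x|^{2} \le C t\, e^{c t}(1+|x|)^2$ or, better, $\mathbb{E}|X_t^x-x|^{2\theta} \le C t^{\theta} e^{ct}$ — the point being to extract the small factor $t^{\theta}$ for small $t$. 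Since $b$ has at most linear growth and $\sigma$ is bounded, a Gronwall/BDG estimate gives $\mathbb{E}|X_t^x-x|^{2} \le C(1+|x|^2)(t + t^2)e^{ct}$; but the $(1+|x|^2)$ factor is harmful for a sup-norm bound, so the essential observation is that over a short time the increment $X_t^x - x$ is driven mainly by $\int_0^t \sigma\,dW$ plus $\int_0^t b\,ds$, and using $\sup_{s\le t}|X_s^x|$-moment control one shows $\mathbb{E}\,|X_t^x-x|^{2\theta}\le C\,t^{\theta}e^{ct}$ with $C$ independent of $x$, because the bad linear-growth contribution carries an extra power of $t$. For the second (martingale) factor, It\^o isometry gives $\mathbb{E}\int_0^t |\sigma^{-1}(X_s^x) D_h X_s^x|^2\,ds \le \|a^{-1}\|_0\, t\, \sup_{s\le t}\mathbb{E}|D_h X_s^x|^2 \le \|a^{-1}\|_0\, t\, e^{ct}|h|^2$, where $\mathbb{E}|D_h X_s^x|^2 \le e^{ct}|h|^2$ follows from the linearized SDE since $\nabla b, \nabla\sigma$ are bounded. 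Combining, $|D_h(P_t f)(x)| \le \frac{1}{t}\cdot [f]_\theta\, C t^{\theta/2} e^{ct}\cdot C t^{1/2}e^{ct}|h| = C[f]_\theta\, t^{(\theta-1)/2} e^{ct}|h|$, which is \eqref{gra} for $j=1$. Finally I extend from bounded $C^1$ functions $f$ to general $f\in C^\theta(\mathbb{R}^d)$ by a truncation/approximation argument, using that the estimate constants do not depend on $\|f\|_0$.

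For $j=2,3$ I use the semigroup property: write $P_t f = P_{t/j}(P_{(j-1)t/j}f)$ and iterate, i.e. $D^j P_t f = D^j P_{t/j}\big( \cdots \big)$, interpolating so that one applies the known first-derivative bound repeatedly. More precisely, set $g = P_{t/2}f$; since $P_{t/2}f \in C^{1+\theta}$ with an explicit bound on $[D g]_\theta$ coming from the $j=1$ estimate and standard parabolic smoothing, one gets $D^2 P_t f = D(P_{t/2}(Dg))$ and applies the $j=1$ bound to the $\mathbb{R}^d$-valued function $Dg$, whose $\theta$-seminorm is $\le C (t/2)^{-(1-\theta)/2}e^{ct}[f]_\theta$ — wait, one should instead use that $P_{t/2}$ maps $C^0_b$ into $C^1$ with $\|D P_{t/2}h\|_0 \le C (t)^{-1/2}e^{ct}\|h\|_0$; combined with $\|D^{j-1}P_{t/2}f\|_0 \le M_{j-1}[f]_\theta (t/2)^{-(j-1-\theta)/2}e^{c t}$ (the induction hypothesis, where now $D^{j-1}P_{t/2}f$ is bounded), one obtains $\|D^j P_t f\|_0 = \|D P_{t/2}(D^{j-1}P_{t/2}f)\|_0 \le C t^{-1/2}e^{ct}\,\|D^{j-1}P_{t/2}f\|_0 \le M_j [f]_\theta t^{-(j-\theta)/2}e^{c_j t}$. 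This requires the companion bound $\|D P_s h\|_0 \le C s^{-1/2}e^{cs}\|h\|_0$ for bounded $h$, which is itself the $\theta=0$ analogue of the $j=1$ argument above (now the first factor $\mathbb{E}|f(X_s^x)-f(x)|^2$ is simply replaced by $4\|h\|_0^2$), so it comes essentially for free from the same BEL computation.

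The main obstacle is the $x$-uniformity of the moment estimate $\mathbb{E}|X_t^x - x|^{2\theta} \le C\, t^{\theta} e^{ct}$ with $C$ independent of $x$, in the presence of an unbounded (only linearly growing) drift. This is exactly the point flagged in the introduction as requiring "an argument from the proof of \cite[Theorem 3.3]{Pr2}": one must avoid the naive $(1+|x|)$-type factors by exploiting that the linear-growth part of $b$ contributes to the increment only through $\int_0^t b(X_s^x)\,ds$, whose size over $[0,t]$ is $O(t\sup_{s\le t}(1+|X_s^x|))$, and then balancing this with the a priori control $\mathbb{E}\sup_{s\le t}(1+|X_s^x|)^p \le (1+|x|)^p e^{ct}$ so that, after raising to the power $\theta<1$ and localizing in $t$, the $x$-dependence is absorbed. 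All the other steps — It\^o isometry, BDG, Gronwall for the linearized flow, and the final truncation removing the boundedness of $f$ — are routine.
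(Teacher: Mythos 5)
Your argument for $j=1$ breaks down at the one point the lemma is actually about. The claimed moment bound $\mathbb{E}|X_t^x-x|^{2\theta}\le C\,t^{\theta}e^{ct}$ with $C$ independent of $x$ is false when $b$ has only linear growth: the drift displaces $X_t^x$ from $x$ by roughly $t\,b(x)$. Take $d=1$, $b(x)=x$, $\sigma$ constant; then $X_t^x-x=x(e^{t}-1)+\text{(Gaussian of bounded variance)}$, so $\mathbb{E}|X_t^x-x|^{2\theta}\gtrsim |x|^{2\theta}(e^{t}-1)^{2\theta}$, which is unbounded in $x$ for every fixed $t>0$; raising to the power $\theta<1$ or ``localizing in $t$'' cannot absorb the $(1+|x|)^{2\theta}$ factor, and the resulting bound on $\langle DP_tf(x),h\rangle$ would still carry a factor $(1+|x|)^{\theta}$, i.e.\ no sup-norm estimate \eqref{gra}. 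In other words, subtracting the constant $f(x)$ in the Bismut--Elworthy--Li formula is the wrong centering. The paper (following Priola) centers instead at the \emph{deterministic flow} $Y_t^x$ solving $\dot Y=b(Y)$, $Y_0^x=x$: since $J^1(t,x,h)$ is a zero-mean stochastic integral and $Y_t^x$ is deterministic, $\mathbb{E}[f(Y_t^x)J^1(t,x,h)]=0$, so one may replace $f(X_t^x)$ by $f(X_t^x)-f(Y_t^x)$; and the difference $X_t^x-Y_t^x$ has drift $b(X^x)-b(Y^x)$, so Gronwall with $\|Db\|_0<\infty$ and the boundedness of $\sigma$ give $\mathbb{E}|X_t^x-Y_t^x|^q\le M\,t^{q/2}e^{ct}$ \emph{uniformly in $x$} (estimate \eqref{f9}), which is exactly what produces the $t^{\theta/2}$ gain independently of $x$. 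This deterministic-flow comparison is the missing idea; your ``balancing'' paragraph gestures at it but does not supply a valid substitute.

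There is a second, smaller gap in your step $j=2,3$: the identity $D^{j}P_tf=DP_{t/2}\bigl(D^{j-1}P_{t/2}f\bigr)$ is not available, because $D$ and $P_s$ do not commute for variable coefficients: $D(P_sg)(x)=\mathbb{E}\bigl[Dg(X_s^x)\,\eta_s(x,\cdot)\bigr]\neq P_s(Dg)(x)$. The paper instead differentiates the representation \eqref{bism} of $DP_tf=D\bigl(P_{t/2}(P_{t/2}f)\bigr)$ in $x$, which produces two terms: one where the derivative falls on $(P_{t/2}f)(X_{t/2}^x)$ (controlled by the $j=1$ bound and the moment estimate \eqref{lip} for $\eta$), and one where it falls on $J^1$, giving $D_kJ^1$ multiplied by the \emph{unbounded} function $P_{t/2}f(X_{t/2}^x)$ --- and this term again requires the $Y^x$-subtraction (using $\mathbb{E}[D_kJ^1]=0$) together with the Lipschitz-in-$x$ estimate for $P_sf$. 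So even granting your first-derivative bound, the induction step as written does not go through and needs to be reorganized along these lines; your companion estimate $\|DP_sh\|_0\le Cs^{-1/2}e^{cs}\|h\|_0$ for bounded $h$ is correct but does not rescue the false commutation.
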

\begin{proof} {\it I Step.}
 First note  that $\mathbb{E}
 [\sup_{t \in [0,T]}
|X_t^x|^{q}]  \le  C_T(1 + |x|^q)$, for any $T>0$, $x \in
\mathbb{R}^d$, $q \ge 1$ (see, for instance, \cite[Chapter II]{K}).

\smallskip  It is also known  that, for any $t \ge 0$, the
mapping:
\begin{align}\label{frec}
x \mapsto
X_t^x \;\;
\text{is three times Fr\'echet differentiable from $\RR^d$ into
$L^2 (\Omega)$}
\end{align}
(see \cite[Section 1.3]{Ce} which contains
a more general result). Let us
write  the Fr\'echet derivatives:
$$
\eta_t (x, h) = D_x (X_t^x) [h], \;\; \xi_t (x,h,k) =
D_x^2 (X_t^x) [h, k],\;\; \psi_t (x,h,k,l) =
 D_x^3 (X_t^x) [h, k,l],
$$
for any $x,h,k,l \in \RR^d$. These derivatives satisfy suitable
stochastic variation equations (see  \cite[Chapter II]{K}). We only
write down the variation equation  for $\eta_t = \eta_t (x,h)$:
$$
d \eta_t = Db (X_t^x) \eta_t + D \sigma (X_t^x) \eta_t
dW_t,\;\;\; \eta_0 =h.
$$
Using standard estimates, based on the Burkholder inequality, we get
that, for any $p \ge 1$, that there exist positive constants $C$ and
$c$ (depending on $p$,  $\| Db\|_0$ and  $\| D \sigma \|_0$) such
that, for any $x \in \RR^d$, $h \in \RR^d$,
\begin{equation} \label{lip}
\E  |\eta_t (x,h)|^p  \le C |h|^p e^{c t},\;\; t \ge 0.
\end{equation}
In a similar way, using the second and third variation equations, we
obtain the estimates:
\begin{equation} \label{lip1}
\E  |\xi_t (x,h,k)|^p  \le C_2 |h|^p |k|^p e^{\hat c_2 t},\;\;
\end{equation}
$$
\E  |\psi_t (x,h,k,l)|^p  \le C_3 |h|^p |k|^p |l|^p \,  e^{\hat
c_3
t},\;\;
t \ge 0,
$$
for any $x, h,k,l \in \RR^d$ (with   positive constants $C_i $ and
$\hat c_i$ which depend on $p$ and on the supremum norms of the
derivatives of $b$ and $\sigma$ up to the  order $i$, $i=2,3$).

\smallskip {\it II Step.}
Arguing similarly to   \cite[Section 1.5]{Ce} one can prove that,
for any $f \in C^{\theta } (\mathbb{R}^d) $, $t>0$,  the map: $x
\mapsto P_t f (x)$ is differentiable  on $\mathbb{R}^d$
and, moreover, we have the following Bismut-Elworthy-Li formula:
\begin{equation} \label{bism}
\langle D P_t f(x), h \rangle =
\mathbb{E} \Big [ f(X_t^x) \, J^1 (t,x,h) \Big], \;\;
x,\, h \in \mathbb{R}^d, \; t>0,
 \; \mbox{where}
\end{equation}
$$
J^1 (t,x,h)= \frac{1}{t}
 \int_0^t \langle \sigma^{*} (X_s^x) \,  a^{-1}(X_s^x)
\, \eta_s (x,h) , dW_s
  \rangle.
$$
Note that   formula \eqref{bism} is first proved
for bounded $f\in C^2_b (\mathbb{R}^d)$.
Then a
straightforward approximation argument shows that
\eqref{bism} holds even for (a possibly unbounded)
 $f \in C^{\theta}
 (\mathbb{R}^d)$.
However, to be precise, in \cite{Ce}, it is assumed that
$\sigma(x)$ is an invertible $d \times d $ matrix and so
 the expression of  $J^1$ in \cite[Section 1.5]{Ce} contains
$\sigma^{-1} (X_s^x)$ instead of our $\sigma^{*} (X_s^x) \,
a^{-1}(X_s^x)$. We briefly explain why \eqref{bism} holds
 following the proof of
\cite[Theorem 5.1]{PZ}. We   only discuss the crucial point
of the argument
which is needed to get \eqref{bism}
when $f\in C^2_b (\mathbb{R}^d)$. One has  by the It\^o formula
$$
f(X_t^x) = P_t f(x) +
 \int_0^t \langle  DP_{t-s}f(X_s^x) ,
 \sigma (X_s^x) dW_s \rangle.
$$
Multiplying both terms of the identity by the martingale
 $$K_t  =
\int_0^t \langle \sigma^{*} (X_s^x) \,  a^{-1}(X_s^x)
\, \eta_s (x,h) , dW_s \rangle,
$$
and taking the expectation, one
arrives at
$$
\mathbb{E} [f(X_t^x) K_t] = \int_0^t \mathbb{E} [\langle DP_{t-s}f(X_s^x)
, \eta_s (x,h) \rangle ] ds =
 t \langle DP_t f(x), h\rangle.
$$
Thus   \eqref{bism} is proved.

\smallskip Now {\it the  problem is to show that, for  $ f \in
C^{\theta}
 (\mathbb{R}^d)$, $t>0$, the map:
   $x \mapsto \langle D P_t f(x), h \rangle$ is a bounded
 function (we cannot use as in \cite{Ce} the boundedness of
   $f$).}

\vskip 1mm  By using \eqref{lip}, we get easily that  there exist
$C_1 >0$ depending on  $\| a^{-1}\|_0$, $ \|Db \|_0$
 and $\| D \sigma\|_0$ such that
\begin{equation} \label{f4}
\mathbb{E} |J^1 (t,x,h)|^2 \le
\frac{C_1 e^{C_1 t}}{t} |h|^2,\;\;\; t > 0.
\end{equation}
Now  we prove the  crucial estimate of the first derivative in
\eqref{gra}.  We use an argument from the proof of \cite[Theorem
3.3]{Pr2}.
Introduce the deterministic process
$$
Y_t^x =  x + \int_0^t b(Y_s^x)ds,\;\; t \ge 0, \; x \in
\mathbb{R}^d,
$$   which solves $ \dot  {Y^x_t }  = b(Y^x_t), \;\;\;
     Y^x_0  = x. $
Using that $\sigma$ is bounded and applying the Gronwall lemma, we
find, for any $q \ge 1$,
\begin{equation} \label{f9}
\mathbb{E} |X_t^x - Y_t^x |^q \le M t^{q/2} \,
 e^{c_1 t },\;\;\; t \ge 0, \; x \in
\mathbb{R}^d,
\end{equation} where $M$ depends on $\| \sigma\|_0$ and $q$
 and $c_1 $ on $\|
Db\|_0$ and $q$. Since
     $$
\mathbb{E} \big [ f(Y_t^x) \, J^1 (t,x,h) ]
= f(Y_t^x) \langle D (P_t 1)(x), h \rangle
=0,\;\; t>0,\;\; h \in \mathbb{R}^d,
\; x \in \mathbb{R}^d,
$$
we have (see also \eqref{vai})
$$
| \langle D P_t f(x), h \rangle|  = \Big|
 \mathbb{E} \big [ (f(X_t^x)- f(Y_t^x)) \, J^1 (t,x,h) \big] \Big|
$$
$$
\le  2[f]_{\theta} \, \mathbb{E} \big[ (|X_t^x - Y_t^x |^{\theta} \,
 \vee |X_t^x - Y_t^x |^{}) \, \,
|J^1 (t,x,h)| \big]
$$ \begin{equation}
\label{dfr}   \le 2[f]_{\theta} \, \big(\mathbb{E} \big[
|X_t^x- Y_t^x |^{2\theta} \vee |X_t^x - Y_t^x |^{2} \big]
\big)^{1/2} \,
    (\mathbb{E} |J^1 (t,x,h)|^2)^{1/2},
\end{equation}
$t>0$. Using that $a \vee b \le a + b$, $a, b \ge 0$,
 and the previous estimates \eqref{lip} and
 \eqref{f9}, we find
\begin{align} \label{s7}
| \langle D P_t f(x), h \rangle| \le C''
[f]_{\theta}(t^{\theta/2} + t^{1/2})
\frac{e^{c''   t}}{ t^{1/2}}|h| \le
[f]_{\theta}  \frac{C' e^{c'   t}}{ t^{1/2 - \theta/2}} |h|, \;\;
t>0,\; x \in \mathbb{R}^d,
\end{align}
where $C'$ and $c$ depend on $ \| \sigma \|_0,$ $\|
a^{-1} \|_0$, $\| D \sigma \|_0$,
$\| D b \|_0$ and $\theta$.

\medskip Let us consider the remaining estimates in \eqref{gra}.
We have, using the semigroup law,  $P_t f = P_{t/2}
 (P_{t/2} f)$ and so  (cf. \cite[formula (1.5.2)]{Ce}),
  for any
   $x,\, h, \, k \in \RR^d,$ $  t>0,$
$$
\langle D^2 (P_t f) (x) k, h \rangle
=
D_k \Big( \mathbb{E} \Big [ (P_{t/2}f)(X_{t/2}^{(\cdot)})
\, J^1 (t/2,(\cdot),h) \Big] \Big)(x)
$$
$$
= \mathbb{E} \Big [ \langle D P_{t/2}f(X_{t/2}^{x}), \eta_{t/2}
(x, k) \rangle
\, J^1 (t/2,x,h) \Big] \, +
\, \mathbb{E} \Big [  P_{t/2}f(X_{t/2}^{x})
\, D_k J^1 (t/2,x,h) \Big] $$$$
= \Gamma_1(t,x) + \Gamma_2(t,x),
$$
where $D_k$ denotes the directional derivative along the vector $k$
(indeed, for any fixed $t>0$ and $h \in \RR^d$,
the mapping:  $x \mapsto J^1 (t/2,x,h)$ is
  Fr\'echet differentiable from $\RR^d$ into
$L^2 (\Omega)$; this follows easily, using \eqref{frec},
 \eqref{f9}, \eqref{lip} and \eqref{lip1}).
We have
$$ D_k J^1 (t/2,x,h)= \frac{2}{t}
 \int_0^{t/2} \langle D\sigma^{*} (X_s^x)[\eta_s (x,k) ]
  \,  a^{-1}(X_s^x)
\, \eta_s (x,h) , dW_s
  \rangle
$$
$$
- \frac{2}{t}
 \int_0^{t/2} \langle \sigma^{*} (X_s^x)
  \,   a^{-1}(X_s^x) \, Da (X_s^x) [\eta_s (x,k) ] \,
  a^{-1}(X_s^x)
\, \eta_s (x,h) , dW_s
  \rangle
  $$$$
+\frac{2}{t}
 \int_0^{t/2} \langle \sigma^{*} (X_s^x)
  \,   a^{-1}(X_s^x)
\, \xi_s (x,h,k) , dW_s
  \rangle.
$$
 Using the Schwarz inequality, \eqref{f4} and
 $$ \sup_{x \in \mathbb{R}^d} (\E| \langle
 D P_{t/2} f(X^x_{t/2}), \eta_{t/2} (x,k)
 \rangle |^2)^{1/2}
 \le [f]_{\theta}  \frac{C'' e^{c'  \theta t}}{ t^{1/2 - \theta/2}}
|k|,$$
we get immediately $ |\Gamma_1(t,x) | \le    M  [f]_{\theta }  \,
\frac{e^{c  \,  t}}{t^{(2 -\theta)/2}}|h| |k|$, $t>0$,
$x \in \RR^d$. To estimate
$\Gamma_2$, first note that, by taking $f=1$,
$$
0=  \langle D^2 (P_t 1) (x) k, h \rangle = 0 +
 \mathbb{E} \big [
\, D_k J^1 (t/2,x,h) \big],
$$
for any $x, h, k \in \RR^d$. We find (arguing similarly to
\eqref{dfr})
$$ \Gamma_2(t,x)=
\mathbb{E} \Big [  \big(P_{t/2}f(X_{t/2}^{x}) -
P_{t/2}f(Y_{t/2}^{x}) \big)
\, D_k J^1 (t/2,x,h) \Big].
$$
Since
$$
|P_{s}f(x) - P_{s}f(y)| \le \E |f (X_{s}^{x}) -
f(X_{s}^{y})| \le 2 [f]_{\theta}
 \mathbb{E} \big[ (|X_s^x - X_s^y |^{\theta} \,
 + |X_s^x - X_s^y |^{})
$$
$$ \le  2 [f]_{\theta} M (|x-y|^{\theta} + |x-y|)
 e^{c_1 s },\;\;\; s \ge 0, \;\; x, \, y \in
\mathbb{R}^d,
$$
we find, for any $x \in \RR^d$, $t>0,$
$$
|\Gamma_2(t,x)| \le 2  M
 e^{c_1 t/2 } [f]_{\theta} \, \mathbb{E} \big[ (|X_{t/2}^x - Y_{t/2}^x
|^{\theta} \,
 + |X_{t/2}^x - Y_{t/2}^x |^{}) \, \,
|D_k J^1 (t/2,x,h)| \big].
$$
$$
\le 2  M
 e^{c_1 t/2 } [f]_{\theta}  \,
 \big(\mathbb{E} \big[
|X_{t/2}^x- Y_{t/2}^x |^{2\theta} + |X_{t/2}^x - Y_{t/2}^x |^{2}
\big] \big)^{1/2} \,
    (\mathbb{E} |D_k J^1 (t/2,x,h)|^2)^{1/2}
$$
$$
\le [f]_{\theta}  \frac{C_1 e^{c_1   t}}{ t^{1/2 - \theta/2}}
|k||h|,
$$
where $C_1$ and $c_1$ depends on
 $ \| \sigma \|_0,$ $\|
a^{-1} \|_0$, $\| D \sigma \|_0$, $\| D^2 \sigma \|_0$
$\| D b \|_0$, $\| D^2 b \|_0$ and $\theta$.
We have so obtained estimate in \eqref{gra} corresponding
to $j =2$.

The estimate for $j =3$ follows in a similar way.
\end{proof}

\subsection{ The main regularity result  }

With respect to the previous section, here   we   consider the elliptic operator
$$
L u(x)  = \frac{1}{2} Tr (a(x) D^2u (x)) + b(x)\cdot D u(x),
\;\;\; x \in \mathbb{R}^d,
$$
with $a(x) = \sigma(x) \sigma^*(x)$,   assuming
Hypotheses \ref{hy1}, \ref{hy2} and \ref{hy3}.

\vskip 1mm The next  result provides new   estimates
for $L$ in H\"older spaces.  These estimates are
not covered
by recent papers dealing with elliptic and parabolic equations with
unbounded coefficients, due to the fact that in our case also
   {\it $f$ can be  unbounded} (compare with \cite{Ce},
\cite{BL}, \cite{KP} and the references therein).

\begin{theorem} \label{bbo}
 Let $\theta \in (0,1)$.   For any $\theta' \in
 (0,\theta)$, there exists  $\lambda_0>0$ (depending on
 $\theta, \theta ' , d,$ $[b]_{\theta}$, $\| \sigma \|_0$,
  $\| a^{-1}\|_0$,  $\| D^k \sigma\|_0,$
  $k=1, 2, 3$) such that, for $\lambda \ge \lambda_0$,
    for any
 $f \in C^{\theta} (\mathbb{R}^d)$, the
equation
\begin{equation} \label{u}
\lambda u - L u = f
\end{equation}
admits a unique classical solution
$u = u_{\lambda}\in C^{2+ \theta'} (\mathbb{R}^d)$ for which
\begin{equation}
\label{sh} \| u\|_{2+ \theta \, '} = \| u(\cdot) \, (1+
|\cdot|)^{-1} \|_0 +
  \|Du\|_0  + \|D^2u\|_0  + [D^2 u ]_{\theta\, '} \le
C(\lambda) \| f \|_{\theta }
\end{equation}
with $C(\lambda)$ (independent on $u$ and $f$)  such that
$C(\lambda)$ $\to 0$
as $\lambda \to + \infty$.
\end{theorem}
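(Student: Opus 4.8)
The candidate solution is the resolvent of the diffusion semigroup,
\[
u=u_\lambda:=\int_0^\infty e^{-\lambda t}P_t f\,dt,
\]
and the idea is to read off the four quantities in \eqref{sh} directly from Lemma \ref{semi}. First I would reduce to the case in which $b$ additionally satisfies \eqref{hyy}: given a general $b\in C^\theta(\mathbb{R}^d;\mathbb{R}^d)$, mollify it to smooth fields $b_n$ with $[b_n]_\theta\le C[b]_\theta$ and $b_n\to b$ locally uniformly, solve $\lambda u_n-L_nu_n=f$ with the estimate \eqref{sh} and a threshold $\lambda_0$ not depending on $n$, and pass to the limit by interior $C^{2+\theta'}$-compactness (Arzel\`a--Ascoli) together with lower semicontinuity of $\|\cdot\|_{2+\theta'}$; the equation and the bound survive, and uniqueness is inherited. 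So the substance is under \eqref{hyy}, with constants controlled by $\theta,\theta',d,\|b\|_\theta$ and the data of $\sigma$.

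Under \eqref{hyy}, the moment estimate $\mathbb{E}[\sup_{s\le T}|X_s^x|^q]\le C_T(1+|x|^q)$ and the linear growth $|f(x)|\le\|f\|_\theta(1+|x|)$ give $|P_tf(x)|\le C(1+|x|)\|f\|_\theta e^{ct}$ with $c$ depending only on $\|b\|_\theta,\|\sigma\|_0$, so $u$ is well defined, of at most linear growth, and $\|(1+|\cdot|)^{-1}u\|_0\le C\|f\|_\theta/(\lambda-c)$. Differentiating under the integral sign (justified by \eqref{gra}) and using \eqref{gra}, for $j=1,2$,
\[
\|D^j u\|_0\le M_j[f]_\theta\int_0^\infty e^{-(\lambda-c_j)t}t^{-(j-\theta)/2}\,dt=M_j\,\Gamma\big(1-\tfrac{j-\theta}2\big)[f]_\theta\,(\lambda-c_j)^{(j-\theta)/2-1},
\]
which is finite because $(j-\theta)/2<1$ and tends to $0$ as $\lambda\to\infty$.

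For the H\"older seminorm of $D^2u$ the gap $\theta'<\theta$ is exactly what is used: for $|x-y|\le1$ estimate
\[
|D^2P_tf(x)-D^2P_tf(y)|\le\min\big(2\|D^2P_tf\|_0,\ \|D^3P_tf\|_0\,|x-y|\big)\le\big(2\|D^2P_tf\|_0\big)^{1-\theta'}\big(\|D^3P_tf\|_0\big)^{\theta'}|x-y|^{\theta'},
\]
and \eqref{gra} with $j=2,3$ yields $[D^2u]_{\theta'}\le C[f]_\theta\int_0^\infty e^{-(\lambda-\tilde c)t}t^{-(2-\theta+\theta')/2}\,dt$ with $\tilde c=(1-\theta')c_2+\theta'c_3$; the $t$-singularity has order $(2-\theta+\theta')/2$, which is $<1$ precisely when $\theta'<\theta$, so the integral equals $C[f]_\theta(\lambda-\tilde c)^{(\theta'-\theta)/2}\to0$. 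Assembling the four bounds gives $u\in C^{2+\theta'}(\mathbb{R}^d)$ and \eqref{sh} with $C(\lambda)\to0$, for $\lambda>\lambda_0:=\max(c,c_1,c_2,\tilde c)$.

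It remains to check that $u$ is a classical solution of \eqref{u} and is the only one. For $v\in C_b^2(\mathbb{R}^d)$ one has $\partial_tP_tv=P_tLv$, hence $\lambda R_\lambda v-LR_\lambda v=v$; approximating $f\in C^\theta$ locally uniformly by such $v$ with $\|v\|_\theta$ bounded and using the estimate just obtained together with interior Schauder estimates (to turn local $C^2$-convergence into validity of the equation), the identity extends to $f$. For uniqueness, a solution $w\in C^{2+\theta'}$ of $\lambda w-Lw=0$ of at most linear growth is killed by applying It\^o's formula to $e^{-\lambda t}w(X_t^x)$, taking expectations and letting $t\to\infty$. The two genuinely delicate points are: the integrability/decay bookkeeping for $[D^2u]_{\theta'}$, which is where the loss $\theta'<\theta$ is unavoidable and why these elliptic estimates are ``non-standard''; and, in the reduction step, obtaining a threshold $\lambda_0$ and a constant $C(\lambda)$ not depending on derivatives of $b$ — the exponential rates produced by Lemma \ref{semi} a priori involve $\|Db\|_0$ (through the variation process $\eta$), so one must re-examine that part, in particular running the Gronwall estimate for $\mathbb{E}|X_t^x-Y_t^x|^q$ using only $|b(z)-b(w)|\le2[b]_\theta(1+|z-w|)$, so that the bounds are stable along $b_n\to b$. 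I expect this uniformity to be the main obstacle.
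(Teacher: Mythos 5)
Your treatment of the smooth-drift case is essentially the paper's Step I--II: the resolvent representation $u=\int_0^\infty e^{-\lambda t}P_tf\,dt$, the bounds on $\|(1+|\cdot|)^{-1}u\|_0$, $\|Du\|_0$, $\|D^2u\|_0$ from Lemma \ref{semi}, and the interpolation between the $j=2$ and $j=3$ estimates to get $[D^2u]_{\theta'}$ with a constant that decays because $(2-\theta+\theta')/2<1$. That part is fine, and your uniqueness argument (It\^o's formula applied to $e^{-\lambda t}w(X_t)$ for a linear-growth $C^2$ solution of the homogeneous equation) is an acceptable alternative to the paper's reduction $u=v\eta$, $\eta(x)=\sqrt{1+|x|^2}$, plus the classical maximum principle.

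The genuine gap is the reduction from $b\in C^\theta$ to the smooth case, and it is exactly the point you flag at the end but do not resolve. Your plan needs the threshold $\lambda_0$ and the constant $C(\lambda)$ for the mollified drifts $b_n$ to be uniform in $n$, hence to depend only on $[b]_\theta$ and the data of $\sigma$. But the constants $c_j,M_j$ in Lemma \ref{semi} depend on $\|D^jb\|_0$, $j=1,2,3$, and this dependence does not sit only in the Gronwall estimate \eqref{f9}: it enters through the variation processes $\eta,\xi,\psi$ (estimates \eqref{lip}, \eqref{lip1}) which appear inside $J^1$ and $D_kJ^1$ in the Bismut--Elworthy--Li representations of $DP_tf$, $D^2P_tf$, $D^3P_tf$. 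For $b_n=b*\rho_{1/n}$ one has $\|Db_n\|_0\sim n^{1-\theta}[b]_\theta\to\infty$, so no uniform-in-$n$ version of \eqref{gra} comes out of Lemma \ref{semi}, and replacing $\|Db\|_0$ by $[b]_\theta$ there is not a matter of ``re-running Gronwall'': a bound on $\mathbb{E}|\eta_t|^p$ in terms of $[b]_\theta$ alone is essentially the differentiability-of-the-flow statement the whole paper is after, so assuming it here would be circular. The paper circumvents this with a different mechanism (its Steps III--IV): it convolves $b$ with a \emph{fixed, unit-scale} mollifier $\rho$, so that $\|D^k(b*\rho)\|_0\le C[b]_\theta$ for $k=1,2,3$ (since $\int D^k\rho=0$) and $\|b-b*\rho\|_0\le C[b]_\theta$; it then applies the smooth-case estimate to the operator with drift $b*\rho$, treats $\langle b-b*\rho,Du\rangle$ as a bounded perturbation absorbed on the left because $C(\lambda)\to0$, and obtains existence for the original drift by the method of continuity along $(1-\delta)(b*\rho)+\delta b$, not by passing to the limit along a mollifying sequence. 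Without this (or some substitute giving a priori bounds depending only on $[b]_\theta$), your compactness-and-limit scheme does not close.
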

\begin{proof}
Uniqueness can be proved  by the following argument (cf.
\cite[page 606]{Kry2}). Consider
  $\eta (x) = \sqrt{1+ |x|^2}$, $x \in \mathbb{R}^d$.

Defining $u =v \eta $, we  obtain an elliptic equation for the
bounded function $v$, i.e.,
$$
\lambda v(x)  - \frac{1}{2} Tr (a(x) D^2v (x)) -
(b(x) + \frac{a(x) D \eta(x)}{\eta(x)}) \cdot D v(x)
$$
\begin{equation} \label{v}
- \Big(\frac{1}{2} \frac{ Tr(a(x) D^2 \eta(x))}{\eta(x)}  +
b(x) \cdot \frac{ D \eta(x)}{\eta(x)} \Big) v(x) = \frac{f(x)}
{\eta(x)},
\end{equation}
$x \in \mathbb{R}^d$. Note that $v$ has first and second  bounded
derivatives. For $\lambda$ large enough (depending on
 $\| \sigma\|_0$ and  $\| \frac{b}{\eta}\|_0$),
  uniqueness of $v$
follows by the classical maximum principle.

\medskip Now
we divide the rest of the  proof in some   steps.

\medskip \noindent {\bf Step I.}  \ We assume {\it
in addition that $b
\in C^3 (\mathbb{R}^d, \mathbb{R}^d)$ and has all bounded
derivatives
up to the third order}
(but it is not necessarily bounded). We prove that, for sufficiently
large $\lambda>0$, there exists a unique solution $u = u_{\lambda}
\in C^{2+ \theta } (\mathbb{R}^d)$ to the equation
$$
\lambda u - L u =   f \in C^{\theta} (\mathbb{R}^d).
$$
Moreover there exists $C$ (independent on $u$ and $f$) such that
\begin{equation}  \label{gh}
\| u \|_{2+  \theta} \le C  \| f \|_{\theta}.
\end{equation}
Estimates \eqref{gh} are new Schauder estimates since $f$
is not assumed to be bounded (compare with \cite{Ce}
 and \cite{BL})

We consider  the
function
\begin{equation} \label{fr}
u (x) = \int_0^{\infty} e^{- \lambda t } \mathbb{E} [f(X_t^x)] dt =
\int_0^{\infty} e^{- \lambda t } P_t f(x)dt, \;\; x \in \mathbb{R}^d,
\end{equation} where $(X_t^x)$ is the  solution of \eqref{SDE}
 and show that,
for $\lambda$ large enough, $u$ is
a $C^{2+ \theta}(\mathbb{R}^d)$-solution  to our PDE.

\smallskip
Using that
$\mathbb{E} |X_t^x - X_t^y| \le C e^{Ct} |x-y| $,
 $t \ge 0$, $x , y \in \mathbb{R}^d$, we find
$$
|u(x) - u(y)| \le c [f]_{\theta,1} \, (|x-y|^{\theta} \vee
|x-y|),\;\; x , y \in \mathbb{R}^d,
$$
and also $\|u(\cdot) \, (1+ |\cdot |)^{-1} \|_0 \le C
  \|f(\cdot) \, (1+ |\cdot |)^{-1} \|_0  $, for $\lambda$
   large enough.

By Lemma \ref{semi} we get, for $\lambda$ large enough,
$$
\|Du  \|_0 + \|D^2 u  \|_0  \le C [f]_{\theta}.
$$
To estimate the second derivatives of $u$, we proceed
as in \cite[Theorem 4.2]{Pr2}. We have, for any $x, y \in \mathbb{R}^d$
 with $|x-y| \le 1$,
$$
|D^2u(x) - D^2u(y)| = \int_0^{|x-y|^2} e^{- \lambda t } |D^2P_t f(x)
- D^2P_t f(y)|  dt
$$
$$
+ \int_{|x-y|^2}^{\infty} e^{- \lambda t } |D^2P_t f(x) - D^2P_t
f(y)| dt
$$
$$
\le c'' |x-y|^{\theta} [f]_{\theta} +\,
C|x - y|  \, [f]_{\theta} \,  \int_{|x-y|^2}^{\infty} e^{-
\lambda t }  \, \frac{e^{c  t}}{t^{(3 -\theta)/2}} \; dt \le c'
[f]_{\theta} |x-y|^{\theta}.
$$
It remains to check that $u$ is a solution. This is not difficult
thanks to Lemma \ref{semi}
(see, for instance,  \cite[Chapter 1]{Ce} or argue as in
\cite[Theorem 4.1]{Pr2}).

\medskip \noindent {\bf Step II.} Under the assumptions of
Step I,   for any $\alpha \in (0, \theta)$,  we have
\begin{equation}  \label{gh1}
\| u \|_{2+  \alpha } \le C(\lambda)  \| f \|_{\theta},
\end{equation}
with $C(\lambda) \to 0 $, as $\lambda \to + \infty$.
This  is clear if we replace $\| u\|_{2+ \alpha}$
 with  $\displaystyle{  \| u(\cdot) \, (1+
|\cdot|)^{-1} \|_0 }$ $+
  \|Du\|_0  + \|D^2 u \|_{0}$. Therefore, we  only consider
   $[D^2 u]_{\alpha}.$

Combining  the interpolatory estimate: $[v]_{\alpha} \le
C \| v\|_0^{1 - {\alpha}}$
$\|Dv\|^{\alpha}_0$, $v \in C^{1}_b (\mathbb{R}^d)$
 (where $C =C(d)$, see \cite[Section 3.2]{Kr})
with estimates of Lemma \ref{semi} corresponding to  $j=2,3$,
  we find, for any $t>0$,
$$
[D^2 P_t f]_{\alpha} \le  C \|D^2 P_t f \|_0^{1-\alpha} \,
\|D^3 P_t f \|_0^{\alpha} \le
C_4 [f]_{\theta}  \, \frac{e^{c_4 t}}{t^{\gamma}},
$$
with $\gamma = \frac{ 2 - \theta  + \alpha}{2 } <1$
(since $\alpha < \theta$).
It follows
$$
[D^2 u ]_\alpha \le C_4 [f]_{\theta}
\int_0^{+\infty} \frac{e^{(c_4 -\lambda)  t}}  { t^{\gamma}
}  dt \le C_5 [f]_{\theta} (\lambda- c_4)^{\gamma-1}.
$$
The assertion is proved.

\vskip 2mm  \noindent {\bf Step III.} We require that
$b \in C^{\theta} (\mathbb{R}^d, \mathbb{R}^d)$ as in Hypothesis
 \ref{hy1}
 and prove the following
a-priori estimates: if $\lambda $ is large enough and
  $u \in C^{2+ \theta' }(\mathbb{R}^d)$,
  $0 < \theta' < \theta $, is a
solution to  $
\lambda u - L u =   f \in C^{\theta}(\mathbb{R}^d)$, then
\begin{equation} \label{sh1}
\| u(\cdot) \, (1+
|\cdot|)^{-1} \|_0 +  \| D  u\|_{0} + \| D^2
u \|_{\theta'}
 \le K(\lambda) \| f \|_{\theta},
\end{equation}
with  $K(\lambda ) \to 0$, as $\lambda \to
+\infty.$

To prove the  estimate
 we introduce $\rho \in C_0^{\infty}(\mathbb{R}^d)$, $0
\le \rho \le 1 $, $\rho(x) = \rho(-x)$, for any $x \in
\mathbb{R}^d,$
 $\int \rho(x)\, dx =1$. Moreover, $b * \rho$ indicates $b$ convoluted with
$\rho$.

\medskip Write $ \lambda u(x) -
\frac{1}{2} Tr (a(x) D^2u (x)) - (b * \rho) (x)\cdot D u(x) =
f(x) +
 \langle \big( b - (b* \rho) \big)(x), Du(x)\rangle
$. It is easy to see that $b * \rho$  (even if it can be unbounded)
is a $C^{\infty}-$function with all bounded derivatives.
Moreover,  there exists $C = C (\theta, D \rho, D^2 \rho,
 D^3 \rho)>0$ such that
\begin{align} \label{roo}
\| D^k (b* \rho) \|_0 \le C [b]_{\theta},\;\;\; k=1,2,3.
\end{align}
The function $b - (b* \rho)$ is bounded  and we have
$$
\| b - (b* \rho)\|_0 \le C [b]_{\theta}.
$$
It follows that  $ b - (b* \rho) \in C^{\theta}_b (\mathbb{R}^d,
\mathbb{R}^d)$.
Applying Step II,  we find  that
\begin{equation} \label{df}
 \| u\|_{2+ \theta '}
\le C(\lambda)  \| f\|_{\theta} +
C (\lambda)\big \| \langle b - (b* \rho), Du\rangle
 \big \|_{\theta}
\end{equation}
with $C(\lambda) \to 0$. Using that
$$
\| \langle b - (b* \rho), Du\rangle
 \|_{\theta} \le c [ b ]_{\theta} \|Du
\|_0
+ c  [ b ]_{\theta} \| Du\|_{\theta}
\le   c [b]_{\theta} \| u\|_{2 + \theta \, '},
$$
for some constant $c$ depending on $\theta$, we rewrite \eqref{df}:
$$ \| u\|_{2+ \theta'}
\le C(\lambda)  \| f\|_{\theta} +
C (\lambda) c [b]_{\theta } \| u\|_{2 + \theta \, '}.
$$
Choosing $\lambda_0 >0$  such that  $C(\lambda) < \frac{1}{c \,
[b]_{\theta}}$, for $\lambda \ge \lambda_0$, we find, with $u =
u_{\lambda}$
 \begin{equation} \label{apri}
(1 - C (\lambda)  c_{} [b]_{\theta} )\,
\| u\|_{2+ \theta \, '}
\le C(\lambda)  \| f\|_{\theta}.
\end{equation}
Defining $K(\lambda ) = \frac{C(\lambda ) }{1 - C (\lambda)  c_{}
[b]_{\theta}
  }  $, we get the assertion.

\vskip 2mm  \noindent {\bf Step IV.} We show that for $\lambda \ge
\lambda_0 $ (see Step III) there exists
a classical solution $u = u_{\lambda }
\in C^{2 + \theta '} (\mathbb{R}^d)$ to \eqref{u}.
This assertion will conclude the proof.

\smallskip  We fix  $\lambda \ge \lambda_0$.
To prove the result, we will use the
continuity method. To this purpose, using the test function $\rho$
of Step III, we consider:
\begin{equation} \label{drr}
 \lambda u(x) -   \frac{1}{2} Tr (a(x) D^2u (x)) -  (1 - \delta)
(b
* \rho) (x) \cdot D u(x) -  \delta b(x)\cdot D u(x) = f(x),
\end{equation}
$x \in \mathbb{R}^d,$ where $\delta \in [0,1]$ is a parameter. Let
us define
$$
\Gamma = \{  \delta  \in [0,1] \, :\, \mbox{eq. \eqref{drr}
has a unique solution $u =u_{\delta}
 \in C^{2+ \theta '}(\mathbb{R}^d)$},
  \mbox{ for any} \;
  f \in C^{\theta}(\mathbb{R}^d)\}.
$$
$\Gamma $ is not empty since $0 \in \Gamma $ by Step I.
Let us fix $\delta_0 \in \Gamma$ and
rewrite equation \eqref{drr} corresponding to an arbitrary
$\delta \in [0,1]$ as
$$
 \lambda u(x) -   \frac{1}{2} Tr (a(x) D^2u (x)) -  (1 - \delta_0)
(b* \rho) (x) \cdot D u(x) -  \delta_0 b(x)\cdot D u(x)
$$
$$
= f(x) + [\delta - \delta_0] \, ( b - \, b* \rho )(x) \cdot D
u(x).
$$
Introduce the operator ${\cal T} : C^{2+ \theta ' }(\mathbb{R}^d)
\to
C^{2+ \theta ' }(\mathbb{R}^d)$. For any $v \in C^{2+ \theta ' }(\mathbb{R}^d)
$, ${\cal T} v =u $ is the (unique)
$C^{2+ \theta ' }(\mathbb{R}^d)$-function which solves
$$
 \lambda u(x) -   \frac{1}{2} Tr (a(x) D^2u (x)) -  (1 - \delta_0)
(b* \rho) (x) \cdot D u(x) -  \delta_0 b(x)\cdot D u(x)
$$
$$
= f(x) + [\delta - \delta_0] \, ( b - \, b* \rho )(x) \cdot  D
v(x).
$$
Using the a-priori estimates \eqref{apri}, we get that
$$
\| {\cal T} v -  {\cal T} w  \|_{2+ \theta'} \le 2 K(\lambda)
|\delta - \delta_0| \, [b]_{\theta} \,\| v - w \|_{2 + \theta '},
\;\; \; v , w \in C^{2+ \theta ' }(\mathbb{R}^d).
$$
Choosing $|\delta - \delta_0|$ small enough, the operator $\cal T$
becomes a contraction on $C^{2+ \theta ' }(\mathbb{R}^d)$ and
it has a unique fixed point which is the solution to
 \eqref{drr}. Therefore for $|\delta - \delta_0|$ small
  enough, we have that $\delta \in \Gamma$.
     A compacteness argument
  shows that $\Gamma = [0,1]$. The assertion is proved.
\end{proof}

\section{Differentiable stochastic flow  }

{Given }  $x\in{\mathbb{R}}^{d}$, consider the stochastic
differential equation in ${\mathbb{R}}^{d}$ :
\begin{equation}
dX_{t}=b\left( X_{t}\right)  dt + \sigma (X_t) dW_{t},\quad \quad
X_{s}=x,\;\;\; t \ge s \ge 0. \label{SDE1}
\end{equation}
As already mentioned  our key result is the existence of a
\emph{differentiable} stochastic flow
$(x,s,t)\mapsto\varphi_{s,t}(x)$ for equation~(\ref{SDE1}). Recall
the relevant definition from H. Kunita~\cite{K}:

\begin{definition}
A \emph{stochastic flow of diffeomorphisms} (resp. \emph{of class}
$C^{1,\alpha}$) on the stochastic basis $\left(  \Omega,\left( \mathcal{F}{}_{t}\right)
,{}\mathcal{F},P,\left(  W_{t}\right) \right)  $ associated to
equation (\ref{SDE1}) is a map
$(s,t,x,\omega)\mapsto\phi_{s,t}(x)\left(  \omega\right) $, defined
for $0\leq s\leq t $, $x\in{\mathbb{R}}^{d}$, $\omega\in \Omega$
with values in ${\mathbb{R}}^{d}$, such that

\begin{itemize}
\item [(a)]given any $s \ge 0  $, $x\in{\mathbb{R}}^{d}$, the
process $X^{s,x}=\left( X_{t}^{s,x}\left(  \omega\right) ,t\ge s
,\omega\in\Omega\right)  $ defined as $X_{t}^{s,x}=\phi_{s,t}(x)$ is
a continuous $\mathcal{F}_{s,t}$-measurable solution of equation
(\ref{SDE1});

\item[(b)] $P$-a.s., for all
$0\leq s\leq t $, $\phi_{s,t}$ is a diffeomorphism  and the
functions $\phi_{s,t}(x)$, $\phi_{s,t}^{-1}(x)$, $D\phi_{s,t}(x)$,
$D\phi_{s,t}^{-1}(x)$ are continuous in $(s,t,x)$ (resp. of class
$C^{1,\alpha}$ in $x$ uniformly in $(s,t)$,  for $0\leq s\leq t \le
T$, with $T>0$);

\item[(c)] $P$-a.s., $\phi_{s,t}(x)
=\phi_{u,t}(\phi_{s,u}(x))$, for all $0\leq s\leq u\leq t $,
$x\in{\mathbb{R}}^{d}$, and $\phi_{s,s}(x)=x$.
\end{itemize}
\end{definition}

\vskip 2mm
Starting from the work of Zvonkin, an important approach to the
analysis of SDEs with non-regular drift is based on the
transformation $\Psi _{t}:\mathbb{R}^{d}\rightarrow\mathbb{R}^{d}$,
solution of the vector valued equation
\[
\frac{\partial\Psi_{t}}{\partial t}+L\Psi_{t}=0\text{ on }\left[
0,T\right]  ,\quad\Psi_{T}\left(  x\right)  =x
\]
where $\Psi_{t}(x)= \Psi(t,x)$ and
  $\left[  0,T\right]  $ is a time
interval where the SDE is considered. At time $T$, the solution is
an isomorphism by definition;\ one has to prove suitable regularity
and invertibility of $\Psi_{t}$ for $t\in\left[
0,T\right]  $. Then $Y_{t}:=\Psi_{t}\left(  X_{t}\right)  $ satisfies%
\[
dY_{t}=D\Psi_{t}\left(  \Psi_{t}^{-1}\left(  Y_{t}\right)  \right)
\sigma\left(  \Psi_{t}^{-1}\left(  Y_{t}\right)  \right)  dW_{t}.
\]
The irregular drift has been removed. This approach, although
successful (see \cite{Ba},
\cite{GM},  \cite{Kry-Ro},  \cite{Za},
\cite{Za2}), raises two delicate questions: i) one has to deal with
unbounded initial conditions;\ ii) one has to prove some form of
invertibility.

We propose a variant, based on the same operator $L$ but on the
vector valued equation
\[
\lambda\psi-L\psi=b
\]
(under other assumptions one can treat also the time-dependent case
through the parabolic equation
$\lambda\psi_{t}-\frac{\partial\psi_{t}}{\partial t}-L\psi_{t}=b$,
see \cite{FGP}). We find it more tractable than the case of
unbounded initial condition; and we translate the difficult
invertibility issue in the smallness of the gradient of the
solution, obtained by means of a large $\lambda$. When the gradient
of $\psi$ is less than one, the function $\Psi\left( x\right)
=x+\psi\left(  x\right)  $ is invertible and the
process $Y_{t}:=\Psi\left(  X_{t}\right)  $ satisfies%
\[
dY_{t}= D\Psi\left(  \Psi^{-1}\left(  Y_{t}\right) \right)
\sigma\left(  \Psi^{-1}\left(  Y_{t}\right) \right)
dW_{t}+\lambda \psi\left(  \Psi^{-1}\left(  Y_{t}\right) \right)  dt .
\]
So, at the end, the
transformed equation has the same degree of difficulty as in the
case of the Zvonkin's transformation.

\begin{theorem}
\label{th:flow1} Assume  Hypotheses \ref{hy1}, \ref{hy2},
\ref{hy3} and fix  any $\theta '' \in (0, \theta)$.
Then we have the following facts:

\begin{itemize}
\item [(i)](pathwise uniqueness) For every $s\ge 0 $,
$x\in{\mathbb{R}}^{d}$, the stochastic equation (\ref{SDE1}) has a
unique continuous adapted solution $X^{s,x}=\left( X_{t}^{s,x}\big(
\omega\right) ,t\ge s,$ $\omega\in\Omega\big) $.

\item[(ii)] (differentiable flow) There exists a stochastic flow
$\phi= (\phi_{s,t})$ of diffeomorphisms for equation (\ref{SDE1}).
The flow is also of class $C^{1,{\theta}^{''}}$.

\item[(iii)] (stability) Let $(b^{n})\subset
C^{{\theta}}(\mathbb{R}^d, \mathbb{R}^d)$  and let $(\phi^{n})$ be
the corresponding stochastic flows.
Assume that there exists $b \in C^{{\theta}}(\mathbb{R}^d,
 \mathbb{R}^d)$ such that $b_n - b \in C^{{\theta}}_b(\mathbb{R}^d,
 \mathbb{R}^d)$, $n \ge 1$, and $\|b - b_n \|_{C^{{\theta}}_b}
  \to 0$ as $n \to \infty$.
 If  $\phi$ is the  flow associated to
$b$, then, for any
$p\geq 1$, $T> 0$,
\begin{equation}
\label{stability1}
\lim_{n\rightarrow\infty}\sup_{x\in{\mathbb{R}}^{d}} \sup_{0 \le
s\le T} E[ \sup_{u \in [s,T]}
\frac{|\phi_{s,u}^{n}(x)-\phi_{s,u}(x)|^{p}}{(1+|x|)^p}]=0.
\end{equation}%
\begin{equation}
\sup_{n\in \mathbb{N}}\sup_{x\in {\mathbb{R}}^{d}}\sup_{0\leq s\leq
T}E[\sup_{u\in \lbrack s,T]}\Vert D\phi _{s,u}^{n}(x)\Vert
^{p}]<\infty , \label{bound}
\end{equation}
\begin{equation}
\label{stability2}
\lim_{n\rightarrow\infty}\sup_{x\in{\mathbb{R}}^{d}}\sup_{0\leq
s\leq T} E[  \sup_{u \in [s,T]} \Vert
D\phi_{s,u}^{n}(x)-D\phi_{s,u}(x)\Vert^{p}]=0.
\end{equation}
($\Vert\cdot\Vert$ denotes the Hilbert-Schmidt norm).
\end{itemize}
\end{theorem}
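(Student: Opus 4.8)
\textbf{Proof proposal for Theorem \ref{th:flow1}.}

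The plan is to use the It\^o--Tanaka--Zvonkin transformation described just before the statement, built from the solution $\psi = \psi_\lambda \in C^{2+\theta'}(\mathbb{R}^d;\mathbb{R}^d)$ of the vector-valued equation $\lambda\psi - L\psi = b$ provided componentwise by Theorem \ref{bbo}. Fix $\theta' \in (\theta'', \theta)$ and choose $\lambda$ so large (using the last clause of Theorem \ref{bbo}, $C(\lambda)\to 0$) that $\|D\psi\|_0 \le 1/2$. Then $\Psi(x) = x + \psi(x)$ is a $C^{2+\theta'}$-diffeomorphism of $\mathbb{R}^d$ with $\|D\Psi^{-1}\|_0 \le 2$ and, since $\psi$ has at most linear growth while $D\psi, D^2\psi$ are bounded, $\Psi$ and $\Psi^{-1}$ together with their first two derivatives are globally controlled (linear growth for the maps, bounded for the derivatives). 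The first step is to record these properties of $\Psi$ carefully, including that the new drift $\tilde b(y) := \lambda\psi(\Psi^{-1}(y))$ is globally Lipschitz (bounded first derivative of $\psi$, bounded $D\Psi^{-1}$) and the new diffusion $\tilde\sigma(y) := D\Psi(\Psi^{-1}(y))\,\sigma(\Psi^{-1}(y))$ is bounded and $C^{1,\theta'}$ with at most linearly growing first derivative — so in particular locally Lipschitz with linear growth. Hence the transformed SDE $dY_t = \tilde b(Y_t)\,dt + \tilde\sigma(Y_t)\,dW_t$ has, for each starting point, a pathwise-unique strong solution which is non-explosive, and by Kunita's theory (\cite{K}, \cite{K1}) generates a stochastic flow $\tilde\phi_{s,t}$ of class $C^{1,\theta'}$.

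The second step is to transfer everything back through $\Psi^{-1}$. One checks by It\^o's formula that if $Y_t$ solves the transformed equation with $Y_s = \Psi(x)$ then $X_t := \Psi^{-1}(Y_t)$ solves \eqref{SDE1} with $X_s = x$; conversely, applying It\^o's formula to $\Psi(X_t)$ for any solution $X$ of \eqref{SDE1} (legitimate since $\Psi \in C^2$ and $\lambda\psi - \tfrac12\mathrm{Tr}(aD^2\psi) - b\cdot D\psi = 0$, so the drift of $\Psi(X_t)$ is exactly $\lambda\psi(X_t) = \tilde b(\Psi(X_t))$) shows $\Psi(X_t)$ solves the transformed equation. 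This bijection between solution sets, together with the fact that $\Psi$ and $\Psi^{-1}$ are deterministic homeomorphisms, gives pathwise uniqueness for \eqref{SDE1}, proving (i); and setting $\phi_{s,t}(x) := \Psi^{-1}(\tilde\phi_{s,t}(\Psi(x)))$ produces the flow of (ii). The flow property and $\mathcal{F}_{s,t}$-measurability descend directly from those of $\tilde\phi$; that $\phi_{s,t}$ is a diffeomorphism follows since it is a composition of diffeomorphisms; and the $C^{1,\theta''}$ regularity in $x$, uniformly on $0\le s\le t\le T$, follows from $\tilde\phi \in C^{1,\theta'}$ and $\Psi, \Psi^{-1} \in C^{1,\theta'}$ by the chain rule, using $\theta'' < \theta'$ to absorb the loss coming from composing H\"older-continuous derivatives with maps of linear growth (one needs the local-uniform H\"older scale of \eqref{hol} here, not a global one).

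For part (iii) the strategy is to apply the same transformation to each $b_n$, obtaining $\psi_n = \psi_{n,\lambda}$ with a \emph{common} $\lambda$ (legitimate since $\lambda_0$ in Theorem \ref{bbo} depends on $b_n$ only through $[b_n]_\theta$, which is uniformly bounded under the hypothesis $\|b-b_n\|_{C^\theta_b}\to 0$), and $\Psi_n = \mathrm{id} + \psi_n$. The linearity of the map $f \mapsto u_\lambda$ and the estimate \eqref{sh} give $\|\psi_n - \psi\|_{2+\theta'} \le C(\lambda)\|b_n - b\|_\theta \to 0$, in fact $\|\psi_n-\psi\|_{2+\theta'} \le C(\lambda)\|b_n-b\|_{C^\theta_b}\to 0$ since $b_n - b \in C^\theta_b$; hence $\Psi_n \to \Psi$ and $\Psi_n^{-1}\to\Psi^{-1}$ with their first derivatives, locally uniformly and with uniform linear-growth/boundedness bounds. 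Then the transformed coefficients $\tilde b_n, \tilde\sigma_n$ converge to $\tilde b, \tilde\sigma$ in the appropriate (locally uniform, $C^1$) sense with uniform-in-$n$ Lipschitz and growth constants, so standard stability estimates for SDEs with locally Lipschitz, linearly growing coefficients — Gronwall plus Burkholder, applied to $\mathbb{E}\sup_{u\le T}|Y^n_{s,u}(\Psi_n(x)) - Y_{s,u}(\Psi(x))|^p$ and to the corresponding first-variation processes — yield \eqref{bound}, \eqref{stability1} and \eqref{stability2} after composing back with the (uniformly convergent, uniformly Lipschitz) maps $\Psi_n^{-1}$ and differentiating the chain rule $D\phi^n_{s,u}(x) = D\Psi_n^{-1}(Y^n)\,D Y^n\,D\Psi_n(x)$.

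The main obstacle is the regularity/growth bookkeeping for $\Psi$ and its inverse: because $b$ is unbounded, $\psi$ is only of linear growth, so $\Psi$ is not a $C^2_b$-perturbation of the identity, and one must verify that composition with $\Psi^{-1}$ does not destroy the global bounds needed to invoke Kunita's flow theorem and the SDE stability estimates — precisely the point where Theorem \ref{bbo}'s control of $\|u\|_{2+\theta'}$ (linear growth of $u$, \emph{bounded} $Du$, $D^2u$) is essential, and where the loss of a bit of H\"older exponent ($\theta''<\theta'<\theta$) is forced. A secondary technical point is justifying the It\^o formula for $\Psi(X_t)$ and the equivalence of the two SDEs in the unbounded setting, but the linear-growth moment bounds for $X_t$ (cf. Step I of the proof of Lemma \ref{semi}) make this routine.
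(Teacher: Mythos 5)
Your overall route is the paper's: solve $\lambda\psi-L\psi=b$ via Theorem \ref{bbo}, take $\lambda$ so large that $\|D\psi\|_0<1$, conjugate by $\Psi=\mathrm{id}+\psi$, apply Kunita's theory to the Lipschitz conjugated SDE for $Y=\Psi(X)$, and map back through $\Psi^{-1}$. For existence in (i)--(ii) you invert the transformation by a direct It\^o computation on $\Psi^{-1}(Y_t)$, whereas the paper only uses the forward direction to get pathwise uniqueness and then invokes Yamada--Watanabe together with weak existence from Girsanov; the paper itself remarks that your direct route is possible, so this is a harmless variant.

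There is, however, a genuine misstep in (iii). You derive $\|\psi_n-\psi\|_{2+\theta'}\le C(\lambda)\|b_n-b\|_{C^\theta_b}$ from ``the linearity of the map $f\mapsto u_\lambda$''. That map is linear only for a \emph{fixed} operator $L$; here the drift $b_n$ enters the operator as well as the right-hand side, so the solution map $b\mapsto\psi$ is not linear and your estimate does not follow as stated. The correct comparison, as in the paper, is to keep the operator $L$ with drift $b$ and subtract, which yields $\lambda(\psi^n-\psi)-L(\psi^n-\psi)=(b^n-b)+(b^n-b)\cdot D\psi^n$; the extra term is controlled only after establishing the uniform bound $\sup_n\|\psi^n\|_{2+\theta'}\le C$ (Theorem \ref{bbo} with a common $\lambda$, which you do note). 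A second point you gloss over: estimate \eqref{sh} controls the \emph{weighted} sup norm $\|(1+|\cdot|)^{-1}(\psi^n-\psi)\|_0$, not $\|\psi^n-\psi\|_0$, so by itself it gives only locally uniform (or linearly weighted) closeness of $\Psi^n$ to $\Psi$; the paper adds a classical maximum-principle argument (estimate \eqref{max}, using that $b^n-b$ is bounded) to obtain genuine sup-norm convergence $\|\Psi^n-\Psi\|_0\to 0$, and hence of $(\Psi^n)^{-1}$ to $\Psi^{-1}$, which is what the globally-uniform-in-$x$ statements \eqref{stability1}--\eqref{stability2} rest on in the paper's proof. Once the difference equation and this sup-norm step are inserted, the remainder of your stability argument (Gronwall/Burkholder for the conjugated flows, then composition with $\Psi_n$, $(\Psi_n)^{-1}$ and the derivative identity \eqref{dove}) coincides with the paper's Step 4.
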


\begin{proof}
\textbf{Step 1} (auxiliary elliptic  systems).
Let us choose $\theta '$ such that  $0< \theta ''
 <  \theta' < \theta $.

For a fixed
$\lambda \ge \lambda_0 >0$
 (see Theorem \ref{bbo})
 we consider the unique classical
 solution $\psi = \psi_{\lambda} \in C^{2+ \theta ' }
 (\mathbb{R}^d, \mathbb{R}^d)$
  to the elliptic
system
\begin{equation} \label{pde}
\lambda\psi_{\lambda } -  L\psi_{\lambda} =b,\;\;\;
\end{equation}
where
$$
Lu(x)=\frac{1}{2} Tr ( \sigma(x) \sigma^*(x) D^2 u(x) ) +
b(x)\cdot D u(x),
$$
for any smooth function
$u: {\mathbb{R}}
^{d}\rightarrow{\mathbb{R}}^{d}$ (clearly
\eqref{pde} has to be interpreted
componentwise).

Define
\[
\Psi_{\lambda}(x)=x+\psi_{\lambda}(x).
\]

Similarly to  \cite[Lemma 8]{FGP} we have

\begin{lemma} \label{diff} For  $\lambda $ large enough, such that
$\Vert D \psi_{\lambda}\Vert_{0} <1$ (see
Theorem  \ref{bbo}),
the following statements hold:

{{\vskip 2mm \noindent }} (i)  $\Psi_{\lambda}$ has bounded first
and  spatial
derivatives and moreover  the second (Fr\'echet) derivative $D^2_x
\Psi_{\lambda}$ is globally $\theta '$-H\"older continuous.

{{\vskip 2mm \noindent }} (ii) \
$\Psi_{\lambda}$ is
a $C^2$-diffeomorphism of $\mathbb{R}^d$.

{{\vskip 2mm \noindent }} (iii)  $\Psi_{\lambda}^{-1}$ has bounded
first and second
derivatives and moreover
\begin{align}\label{gra1}
D\Psi _{\lambda }^{-1}(y)= \sum_{k\geq 0}\big(-D\psi _{\lambda }
(\Psi _{\lambda }^{-1}(y))\big)^{k},\;\;\;y\in \mathbb{R}^{d}.
\end{align}
\end{lemma}

{\sl In the sequel we will use a value of $\lambda$ for which Lemma \ref{diff} holds
and simply write $\psi$ and $\Psi$ for $\psi_{\lambda}$ and
$\Psi_{\lambda}$.}

\vskip 2mm \textbf{Step 2} (conjugated SDE). Define
\[
\widetilde{b}(y)= \lambda\psi(\Psi^{-1}(y)),\quad\widetilde{\sigma
}(y)=D\Psi(\Psi^{-1}(y)) \, \sigma ( \Psi^{-1}(y) )
\]
and consider, for every $s\ge 0  $ and $y\in{\mathbb{R}}^{d}$, the
SDE
\begin{equation}
Y_{t}=y+\int_{s}^{t}\tilde{\sigma}(Y_{u})dW_{u}+
\int_{s}^{t}\widetilde {b}(Y_{u})du,\qquad t\ge s. \label{conjugated
SDE}
\end{equation}
This equation is equivalent to equation (\ref{SDE1}), in the
following sense.\ If $X_{t}$ is a solution to \eqref{SDE1}, then
$Y_{t}=\Psi(X_{t})$ verifies equation (\ref{conjugated SDE}) with
$y=\Psi(x)$: it is sufficient to apply It\^{o} formula to
$\Psi(X_{t})$ and use equation (\ref{pde}).

Viceversa, given a solution $Y_{t}$ of equation (\ref{conjugated
SDE}), let $X_{t}=\Psi^{-1}(Y_{t})$, then it is possible to prove by
direct application of It\^{o} formula that $X_{t}$ is a solution of
\eqref{SDE1} with $x=\Psi^{-1}(y)$. This is not very important since below
we will obtain this fact indirectly.

\smallskip
\textbf{Step 3} (proof of (i) and (ii)). We have  clearly
$\widetilde{b}$ and $\widetilde{\sigma}\in C^{1+{\theta'}}$
 (with first order derivatives bounded and in $C^{\theta'}_b)$
 so that, in particular, they are Lipschitz continuous.

By classical results (see \cite[Chapter 2]{K}) this implies
existence and uniqueness of a strong solution $Y$ of equation
(\ref{conjugated SDE}) and even the existence of a
$C^{1,\theta^{''}}$  stochastic flow of diffeomorphisms
$\varphi_{s,t}$ associated to equation (\ref{conjugated SDE}).

The uniqueness of $Y$ implies the pathwise
uniqueness of solutions of the original SDE \eqref{SDE} since two solutions $%
X,\tilde{X}$ give rise to two processes $Y_{t}=\Psi (X_{t})$ and $\tilde{Y}%
_{t}=\Psi (\tilde{X}_{t})$ solving~\eqref{conjugated SDE}, then $Y=\tilde{Y%
}$ and then necessarily $X=\tilde{X}$. By the Yamada-Watanabe
theorem pathwise uniqueness together with weak existence (which is a
direct consequence of the Girsanov formula) gives the existence of
the (unique) solution $(X_{t}^{x})_{t\geq s}$ of eq. \eqref{SDE}
starting from $x$ at time $s$. Moreover setting
$$\phi
_{s,t}=\Psi^{-1}\circ \varphi
_{s,t}\circ \Psi
$$ we realize that $\phi _{s,t}$ is the flow of~%
\eqref{SDE} (in the sense that $X_{t}^{x}=\phi _{s,t}(x)$,
$P$-a.s.).

\smallskip \textbf{Step 4.} (proof of (iii)). Let $\psi ^{n}$ and
$\psi$ be the
solutions in $C^{2+\theta' }({\mathbb{R}}^{d};{%
\mathbb{R}}^{d})$ respectively
of the elliptic  problem associated
to
$b_{n}$ and
to $b \in C^{\theta }({\mathbb{R}}^{d};{ \mathbb{R}}^{d})$.
 Notice that we
can make a choice of $\lambda $ independent of $n$.  We write
\begin{equation*}
\lambda \left( \psi ^{n}-\psi \right)-L\left( \psi ^{n}-\psi \right)
=\left( b^{n}-b\right) +\left( b^{n}-b\right) \cdot D\psi ^{n},
\;\;\; n \ge 1.
\end{equation*}
By Theorem~\ref{bbo} we have
 $\sup_{n \ge 1}\| \psi_n\|_{C^{2+\theta' }} \le C < \infty$.
Since $b - b_n$ is a bounded function, by the classical maximum
principle (see \cite{Kr})
 we infer also that $\psi - \psi_n$ is a bounded function on $\RR^d$
 and
  \begin{align} \label{max}
 \| \psi - \psi_n \|_0 \le \frac{C +1}{\lambda}
 \| b - b_n \|_0,\;\;\; n \ge 1.
 \end{align}
It follows that $\psi - \psi_n \in
C^{2 + \theta' }_b({\mathbb{R}}^{d};{ \mathbb{R}}^{d})$
and $\| \psi - \psi_n  \|_{C^{2 + \theta' }_b}$ $\to 0$ as $n \to
 \infty$.

 Fix $p \ge 1$ and consider the flows $\varphi
_{s,t}^{n}=\Psi^{n}\circ \phi _{s,t}^{n}\circ (\Psi^{n})^{-1}$ which
satisfy
\begin{equation}
\varphi_{s,t}^{n}(y)=y+\int_{s}^{t}\widetilde{b}_{}^{n}\circ\varphi
_{s,u}^{n}(y)du+\int_{s}^{t}\widetilde{\sigma}_{}^{n}\circ\varphi
_{s,u}^{n}(y)\cdot dW_{u},
\end{equation}
We have $\widetilde{\sigma }^{n}\rightarrow \widetilde{\sigma }$ and $%
\widetilde{b}^{n}\rightarrow \widetilde{b}$, as $n \to \infty$,
 in
$C^{1+\theta' }({\mathbb{R}}^{d};{\mathbb{R}}^{d\times k}) $
and $C^{1+\theta' }({\mathbb{R}}^{d};{\mathbb{R}}%
^{d})$, respectively. By standard arguments, using the Gronwall
lemma, the Doob inequality and the Burkholder inequality (compare,
for instance, with the proof of \cite[Theorem II.2.1]{K}) we obtain
the analog of \eqref{stability1} for the auxiliary flows $\varphi
_{s,t}^{n}$ and $\varphi _{s,t}$:
\begin{equation}
\label{stability1-bis}
\lim_{n\rightarrow\infty}\sup_{x\in{\mathbb{R}}^{d}} \sup_{0 \le
s\le T} E[ \sup_{u \in [s,T]}
\frac{|\varphi_{s,u}^{n}(x)-\varphi_{s,u}(x)|^{p}}{(1+|x|)^p}]=0.
\end{equation}%
 We  can also
prove the inequality
\begin{equation}
\label{stability20-bis}
\sup_{n\in \mathbb{N}}\sup_{x\in {\mathbb{R}}^{d}}\sup_{0\leq s\leq
T}E[\sup_{u\in \lbrack s,T]}\Vert D\varphi _{s,u}^{n}(x)\Vert
^{p}]<\infty ,
\end{equation}
for $D\varphi _{s,t}^{n}(y)$,   using the fact that the stochastic
equation for $D\varphi _{s,t}^{n}(y)$ has the identity as initial
condition and  random coefficients $D\widetilde{b}^{n}\left( \phi
_{s,u}^{n}\right) $ and $D\widetilde{\sigma }^{n}\left( \phi
_{s,u}^{n}\right) $ which are uniformly bounded functions (since $\|
D \widetilde{b}^{n} \|_0$ $ +\| D\widetilde{\sigma }^{n} \|_0 \le
C$, uniformly in $n$).

To prove~\eqref{bound} is then enough to estimate $D\phi _{s,u}^{n}$
using~\eqref{stability20-bis}, the uniform boundedness of the
derivatives of $\Psi^{n}$ and its inverse (note that the
uniform boundedness of the $D(\Psi^{n})^{-1}$ can be proved
by   \eqref{gra1}).

To prove~\eqref{stability1} we remark that to estimate the
difference $\varphi _{s,t}^{n}(\Psi^n(x))-\varphi _{s,t}(\Psi(x))$
we can split it as $ \varphi _{s,t}^{n}(\Psi^{n}(
x))-\varphi_{s,t}(\Psi^{n}( x)) +\varphi_{s,t}(\Psi^{n}( x))-\varphi
_{s,t}(\Psi( x)) . $ The two differences can then be controlled by
$$
\mathbb{E}[\sup_{s\le u \le T}|\varphi _{s,u}^{n} (\Psi^{n}(
x))-\varphi_{s,u} (\Psi^{n}( x))|^p]\le a_n \,
 (1+|\Psi^{n}(
x)|)^p\le
a_n \, (1+| x|)^p,
$$
(where $a_n = \sup_{x\in{\mathbb{R}}^{d}} \sup_{0 \le s\le T} E[
\sup_{u \in [s,T]}
\frac{|\varphi_{s,u}^{n}(x)-\varphi_{s,u}(x)|^{p}}{(1+|x|)^p}]$ and
$\lim_{n \to \infty} a_n =0$) and by
$$
\mathbb{E}[\sup_{s\le u \le T}|\varphi_{s,u}(\Psi^{n}( x))-\varphi
_{s,u}(\Psi( x))|^p]\le \sup_{z\in\mathbb{R}^d}\mathbb{E}[\sup_{s\le u \le T}\|D\varphi_{s,u}(z)\|^p] |\Psi^n(x)-\Psi(x)|^p
$$
$$
\le C \|\Psi^n-\Psi\|_{0}^p,
$$
with $\lim_{n \to \infty}  \|\Psi^n-\Psi\|_{0}
= \lim_{n \to \infty}  \|\psi^n-\psi\|_{0}
 =0$ (see \eqref{max}).

Finally, one has to check that $ (\Psi^{n})^{-1}$ converges to
$\Psi^{-1}$ in the supremum norm.
This follows from the inequality
$$
\sup_{y \in \RR^d}|(\Psi^{n})^{-1} (y) - \Psi^{-1}(y)|
\le \sup_{x \in
\RR^d}
 | (\Psi^{n})^{-1}(\Psi^{n} (x) ) - \Psi^{-1}(\Psi^n (x) )|
$$$$ \le \sup_{x \in \RR^d}
 | \Psi^{-1}(\Psi^{n} (x) ) - \Psi^{-1}(\Psi (x) )|
  \le  \| D \Psi^{-1} \|_0
   \,  \| \Psi -  \Psi^n  \|_0,
$$
which tends to 0, as $n \to \infty$.

Arguing as in the proof of \cite[Theorem II.3.1]{K}, we get the
following linear equation for the derivative $D\phi _{s,t}(x)$
\begin{equation} \label{dove}
\begin{split}
\lbrack D\Psi_{}(\phi_{s,t}(x))] D\phi_{s,t}(x)    =D\Psi_{}(x)
 +\int_{s}
^{t}[D^{2}\Psi_{} (\phi_{s,u}(x))]D\phi_{s,u}(x)
\, \sigma (\phi_{s,u}(x))  dW_{u}\\
+ \int_{s} ^{t} D\Psi_{} (\phi_{s,u}(x))
\, [D \sigma (\phi_{s,u}(x))] D\phi_{s,u}(x)   dW_{u}
-\lambda\int_{s}^{t}[D\psi_{}(\phi_{s,u}(x))]D\phi_{s,u}(x)du,
\end{split}
\end{equation}
$0\leq s\leq t\leq T$, $x\in \mathbb{R}^{d}$. From the
fact that
$ \lim_{n \to \infty} \| \psi ^{n} - \psi \|_{C^{2+\theta' }_b} =0 $
together with \eqref{bound} and \eqref{dove}, we finally obtain
\begin{equation}
\lim_{n\rightarrow \infty }\sup_{x\in {\mathbb{R}}^{d}}\sup_{0\leq
s\leq T}E[\sup_{u\in \lbrack s,T]}\Vert D\phi _{s,u}^{n}(x)-D\phi
_{s,u}(x)\Vert ^{p}]=0,  \label{stima1}
\end{equation}
which concludes the proof.
\end{proof}

\bigskip

We consider now two possible extensions
of Theorem \ref{th:flow1} to
the case when  coefficients
$b$ and $\sigma_i$ are  time-dependent continuous  functions
defined on
 $[0,T] \times \RR^d$, i.e., we are dealing with
\begin{equation}\label{fy}
dX_{t}^{x}=b\left( t, X_{t}^{x}\right)
dt+\sum_{i=1}^{k}\sigma_{i}\left(t, X_{t}^{x}\right) dW_{t}^{i},
\;\;\; t\in\left[  0,T\right]  ,\quad
X_{0}=x.
\end{equation}

\begin{remark} \label {vi}   {\em
 Let us
  treat the case in which also
 $b$ is  {\it bounded.}
 Following \cite{FGP}, an analogous of our  Theorem
\ref{th:flow1} holds for \eqref{fy} if  we  require that
$b$ and $\sigma_i$
are continuous and bounded functions such that
 $$
 \sup_{t \in [0,T]} (\| b(t , \cdot) \|_{C^{\theta}_b}
 + \, \| \sigma_i (t ,
  \cdot) \|_{C^{1+ \theta}_b} ) < \infty,\;\;\; i=1, \ldots, k,
  $$
 and, moreover (as in Hypothesis \ref{hy3}) we  assume
  that $ \sigma (t,x)$ is uniformly non-degenerate, i.e.,
there exists the inverse
 of $a(t,x)= \sigma (t,x)\sigma^* (t,x)$, for any
  $t \in [0,T]$, $x \in \RR^d$, and
\begin{align}\label{f7}
\| a^{-1}\|_{0}  =  \sup_{x \in \mathbb{R}^d, \, t \in [0,T]}
\| a^{-1} (t,x) \| < \infty.
\end{align}
To prove Theorem \ref{th:flow1} under these hypotheses,
 one can   follow the proof
of the analogous result proved in \cite{FGP}.
We only give a sketch of the argument.

First note that  \cite[Theorem 2]{FGP} remains the same even with
the
previous non-constant
 $\sigma = (\sigma_i)$
  (indeed it is a special case of a result in \cite{KP}).
 Then  \cite[Lemma 4]{FGP} is true  with  $\sigma$ in \eqref{fy}
by the following
 rescaling
argument.
Consider
 $\lambda \ge 1$ and
$$
\partial_{t}u_{\lambda}+ L u_{\lambda} -
\lambda u_{\lambda}=f\;\;\; \text{in} \;\; [0,\infty)\times
{\mathbb{R}}^{d},
$$
where $L $ is the Kolmogorov operator associated to the SDE, i.e.,
$$L =
\frac{1}{2} \mathrm{Tr} [a(t,x)  D^2 u(t,x)]
 + b (t,x) \cdot D
u(t,x)$$ (here $( \sigma (t,x) \sigma^*(t,x))  = a(t,x)$ and $D$
and $D^2$
denote spatial derivatives).  Define a function $v$
on $[0,\infty)\times {\mathbb{R}}^{d}$ such
that $
v(\lambda t, \sqrt{\lambda}\,  x) = u_{\lambda} (t,x)$, $t \ge
0, $ $x \in \mathbb{R}^d.
$ It is easy to see that, for any $s \ge 0$, $y \in \mathbb{R}^d$,
$$
\partial_{s} v (s,y) + \mathrm{Tr} [
a \Big (\frac{s}{\lambda}, \frac{y}{\sqrt{\lambda}} \Big) D^2
v(s,y)] + \frac{1}{ \sqrt{\lambda}} b \Big ( \frac{s}{\lambda},
\frac{y}{\sqrt{\lambda}} \Big) \cdot Dv(s,y)
- v (s,y) =
\frac{1}{\lambda}{f \Big( \frac{s}{\lambda},
\frac{y}{\sqrt{\lambda}} \Big)}.
$$
Now  the  spatial H\"older seminorms of $ (s, y ) \mapsto
a(\frac{s}{\lambda}, \frac{y}{\sqrt{\lambda}})$ and $(s, y ) \mapsto
b( \frac{s}{\lambda}, \frac{y}{\sqrt{\lambda}}) $
are clearly independent on $\lambda \ge 1$ and
on $s \ge 0$.
By \cite[Theorem 2.4]{KP},
we deduce in particular, for any $\lambda \ge 1$,
$$
\sup_{s \ge 0} \|D v (s, \cdot) \|_{0}
 \le  \frac{C}{\lambda} \sup_{s \ge 0}\| f(s, \cdot) \|_{\theta },
$$ where $C$ is independent of $\lambda$. It follows
the assertion of  \cite[Lemma 6]{FGP}
since
$$
\sup_{t \ge 0} \|D u_{\lambda} (t, \cdot) \|_{0}
= \sqrt{\lambda}\sup_{s \ge 0} \|D v (s, \cdot) \|_{0}
 \le  \frac{C}{\sqrt{\lambda} } \sup_{s \ge 0}\| f(s, \cdot)
 \|_{\theta }.
$$
The proof of \cite[Theorem 5]{FGP} (which deals with the
stochastic flow)
 remains true  even with $\sigma$
 in \eqref{fy} by a straightforward modification.
}
\end{remark}

\begin{remark}\label{fine}{\em
An analogous of   Theorem \ref{th:flow1}
 holds   for \eqref{fy}
       requiring
  that  Hypotheses \ref{hy1}, \ref{hy2} and
\ref{hy3} are satisfied ``uniformly in time''.

One assumes  that $b$ and $\sigma_i$ are  continuous  functions
defined on  $[0,T] \times \RR^d$, $i=1, \ldots, k$. Moreover,
there  exists ${\theta}\in(0,1)$ such that
$b(t, \cdot )\in C_{}^{\theta }(\mathbb{R}^{d};\mathbb{R}^{d})$,
 $t \in [0,T]$, and $\sup_{t \in [0,T]}
 \|b(t, \cdot ) \|_{C_{}^{\theta }(\mathbb{R}^{d}, \RR^d)} < \infty$.
In addition, $\sigma_i(t, \cdot ) \in C^{3 }_b (\mathbb{R}^d,
 \mathbb{R}^d) $, $t \in [0,T]$,
 $$ \sup_{t \in [0,T]}
  \| \sigma_i (t, \cdot) \|_{C^{3 }_b (\mathbb{R}^d,
 \mathbb{R}^d) } < \infty,
 $$
  $i =1, \ldots, k$, and   one
 requires that condition \eqref{f7} holds.
 Theorem \ref{th:flow1} under these   assumptions
  may be established by adapting the (time-independent) proof
  given in
  the present paper. However, the complete argument,
even if it does not present
   special difficulties,
  is   considerably longer
 (for instance, one has  to prove the analogous of the
  Bismut-Elworthy-Li formula
 \eqref{bism} in the  time-dependent case).
}
\end{remark}

\smallskip

We close the section by an  application of the
stochastic flow.  We obtain
a Bismut-Elworthy-Li type formula for the derivative of
 the diffusion semigroup $(P_t)$ associated to
\eqref{SDE} (compare with \cite{B} and \cite{EL}).  It seems the
first time that such formula is given for  diffusion semigroups
associated to  SDEs with  coefficients which are not locally
Lipschitz.

\begin {theorem} \label{bismut} Let $f :\mathbb{R}^d \to \mathbb{R} $ be
uniformly continuous and bounded. For any $x, \, h \in
\mathbb{R}^d$, we have (cf. \eqref{bism})
$$
D_h P_t f (x) =
 \frac{1}{t} \mathbb{E} [ f(\phi_t(x))
\,  \int_0^t \langle (\sigma^{*}a^{-1})
(\phi_u(x)) D_h \phi_u (x) ,
dW_u \rangle
],\;\;\; t>0,\; x \in \mathbb{R}^d,
$$
where $\langle D P_t f(x), h \rangle = D_h P_t f (x)$ and  $D
\phi_u(x) $ solves \eqref{dove} with $s=0$
 (we set $\phi_u(x)= \phi_{0,u}(x)$).
\end{theorem}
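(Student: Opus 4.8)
The plan is to establish the formula first for smooth drift and then pass to the limit using the stability results of Theorem \ref{th:flow1}(iii). More precisely, I would begin by assuming, in addition to Hypotheses \ref{hy1}--\ref{hy3}, that $b$ satisfies the stronger regularity \eqref{hyy} (i.e.\ $b \in C^3$ with bounded derivatives up to order three). In that case the flow $(\phi_t)$ coincides with the strong solution $(X_t^x)$ studied in Section 2, the map $x \mapsto \phi_t(x)$ is differentiable with derivative $D_h\phi_t(x) = \eta_t(x,h)$ solving the first variation equation, and the Bismut--Elworthy--Li formula \eqref{bism} is exactly the identity asserted in the theorem (with $\eta_u(x,h)$ in place of $D_h\phi_u(x)$, $\phi_u(x) = X_u^x$, and noting $\sigma^* a^{-1} = \sigma^*(\sigma\sigma^*)^{-1}$). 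So under \eqref{hyy} the statement is already contained in Lemma \ref{semi}'s proof, applied to bounded uniformly continuous $f$ (this is in fact the easier, $f$-bounded case treated in \cite{Ce}).

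Next I would remove the extra regularity on $b$ by approximation. Take $b^n = b * \rho_n$ for a standard mollifier $\rho_n$; then $b^n \in C^\infty$ with derivatives of every order bounded (using that $b \in C^\theta$, cf.\ \eqref{roo}), $b^n - b \in C^\theta_b$, and $\|b^n - b\|_{C^\theta_b} \to 0$, so Theorem \ref{th:flow1}(iii) applies. Denote by $\phi^n$ the flow for $b^n$ and by $P_t^n$ the associated semigroup. For fixed bounded uniformly continuous $f$ and fixed $t>0$, $x$, $h$, I would write the formula for $\phi^n$,
\[
D_h P_t^n f(x) = \frac{1}{t}\,\mathbb{E}\Big[ f(\phi_t^n(x)) \int_0^t \langle (\sigma^* a^{-1})(\phi_u^n(x))\, D_h\phi_u^n(x), dW_u\rangle \Big],
\]
and pass to the limit $n \to \infty$ on both sides. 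The left-hand side converges to $D_h P_t f(x)$ because $P_t^n f(x) = \mathbb{E}[f(\phi_t^n(x))] \to \mathbb{E}[f(\phi_t(x))] = P_t f(x)$ (from \eqref{stability1} and uniform continuity of $f$), and the convergence is locally uniform in $x$ together with convergence of the gradients — the gradient bound coming from Lemma \ref{semi} applied uniformly in $n$ (the constants there depend only on the quantities controlled uniformly along the sequence), which gives equicontinuity of $x \mapsto D_h P_t^n f(x)$ on compacts and hence identifies the limit of the derivatives with the derivative of the limit. For the right-hand side I would use the It\^o isometry to reduce to an $L^2$ estimate: the stochastic integral converges in $L^2(\Omega)$ because $\phi_u^n(x) \to \phi_u(x)$ and $D_h\phi_u^n(x) \to D_h\phi_u(x)$ in $L^p$ uniformly in $u \in [0,t]$ by \eqref{stability1}, \eqref{bound}, \eqref{stability2}, while $\sigma^* a^{-1}$ is bounded and Lipschitz by Hypotheses \ref{hy2}--\ref{hy3}; combined with the uniform bound $\mathbb{E}[\sup_{u\le t}\|D_h\phi_u^n(x)\|^2] \le C$ and boundedness of $f$, a Cauchy--Schwarz and dominated-convergence argument yields convergence of the full expectation.

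The main obstacle is the passage to the limit on the \emph{left-hand side}: one needs not merely $P_t^n f \to P_t f$ pointwise but convergence of the spatial derivatives $D P_t^n f \to D P_t f$. This does not follow from stability of the flows alone; it requires the uniform (in $n$) gradient estimate $\|D P_t^n f\|_0 \le C(t)\|f\|_0$ together with a uniform modulus of continuity for $D P_t^n f$ on compact sets. Both are available: the first is the $f$-bounded analogue of Lemma \ref{semi} (with a bound of order $t^{-1/2}$ in place of $t^{-(1-\theta)/2}$, and constants uniform along the sequence since they depend only on $\|a^{-1}\|_0$, $\|\sigma\|_0$, and the sup-norms of $Db^n$, $D\sigma$, all uniformly bounded), and the second follows from the $j=2$ estimate of Lemma \ref{semi} via $\|D^2 P_t^n f\|_0 \le C(t)\|f\|_0$, again uniformly in $n$. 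An Arzel\`a--Ascoli argument then upgrades pointwise convergence of $P_t^n f$ to $C^1_{loc}$ convergence, giving $D_h P_t^n f(x) \to D_h P_t f(x)$ and completing the proof. A minor additional point is to check that the generality of $f$ in the statement (merely bounded and uniformly continuous, not $C^\theta$) is compatible with invoking the $f$-bounded version of the Bismut formula for $b^n$; this is standard since the right-hand side makes sense and is continuous in $f$ for the supremum norm, so one approximates $f$ by $C^2_b$ functions as in the proof of Lemma \ref{semi}.
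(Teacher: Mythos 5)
Your treatment of the right-hand side (It\^o isometry, stability estimates \eqref{stability1}, \eqref{bound}, \eqref{stability2}, Cauchy--Schwarz and dominated convergence) is fine and matches what is needed. The genuine gap is in your passage to the limit on the left-hand side. You correctly identify that one needs $D P_t^n f \to D P_t f$ and propose to get it from bounds $\|D P_t^n f\|_0 \le C(t)\|f\|_0$ and $\|D^2 P_t^n f\|_0 \le C(t)\|f\|_0$ that are \emph{uniform in $n$}, claiming the constants of Lemma \ref{semi} are controlled because ``the sup-norms of $Db^n$, $D\sigma$ are all uniformly bounded''. This is false: for $b$ merely in $C^\theta$ and $b^n = b * \rho_n$ with a shrinking mollifier $\rho_n(x)=n^d\rho(nx)$, one only gets $\|Db^n\|_0 \le C[b]_\theta\, n^{1-\theta}$ and $\|D^2 b^n\|_0 \le C[b]_\theta\, n^{2-\theta}$, which blow up as $n\to\infty$. (The estimate \eqref{roo} you cite is for the \emph{fixed} mollifier $\rho$ of Step III of Theorem \ref{bbo}, not for a sequence $\rho_n$.) Consequently the constants in Lemma \ref{semi} degenerate along the sequence. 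The first-order bound could be rescued by combining the Bismut formula for $P^n_t$ with the uniform derivative-flow bound \eqref{bound} (which is exactly what the It\^o--Tanaka construction buys and does not see $\|Db^n\|_0$), but the uniform second-order bound $\|D^2P^n_t f\|_0\le C(t)\|f\|_0$, which your Arzel\`a--Ascoli/equicontinuity step requires, is not available from Lemma \ref{semi} and is not established anywhere in the paper; producing it would be a substantial new estimate, not a routine remark.

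The paper avoids this issue entirely by ordering the approximations differently: it first takes $f\in C^\infty_b(\mathbb{R}^d)$ and uses the chain-rule representation $D_h P_t^n f(x)=\mathbb{E}[\langle Df(\phi_t^n(x)), D_h\phi_t^n(x)\rangle]$ for the smooth drifts $b^n$. Passing to the limit in \emph{this} expression requires only the flow stability results \eqref{stability1} and \eqref{stability2} (plus \eqref{bound} for uniform integrability), with no derivative estimates on $P^n_t$ at all; the limit $\mathbb{E}[\langle Df(\phi_t(x)), D_h\phi_t(x)\rangle]$ is identified with $D_h P_t f(x)$ by differentiating under the expectation, which is legitimate thanks to the $C^1$ flow of Theorem \ref{th:flow1}. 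Only at the very end is the smoothness of $f$ removed by uniform approximation, as you also propose. If you reorganize your argument this way (smooth $f$ first, then smooth drift limit via the chain-rule form of $D_hP^n_tf$, then general $f$), the problematic uniform-in-$n$ semigroup estimates are never needed and the proof closes.
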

\begin{proof}
We  prove the formula when $f \in C^{\infty}_b (\mathbb{R}^d)$. Indeed,
then,  by a straightforward uniform approximation of $f$, one can
obtain the formula in the general case.

Let $\vartheta :{\mathbb{ R}}^{d}\rightarrow {\mathbb{R}}$ be a
smooth test function such that $0\leq
\vartheta (x)\leq 1$, $x\in {\mathbb{R}}^{d}$,
$\vartheta (x)=\vartheta (-x)$,
 $\int_{{\mathbb{R}}^{d}}\vartheta (x)dx=1$,
 $\mathrm{supp}\,(\vartheta )\subset $ $B(0,2)$,
 $\vartheta (x)=1$ when
$x\in B(0,1)$. For any $n \ge 1$, let $\vartheta _{{n}}(x) ={n
}^{d}\vartheta (n x )$. Define $b_n = b * \vartheta _{{n}}$.

We have that  $b_{n}$ is  a  $C_{}^{\infty}$ and Lipschitz vector
field such that $b - b_n \in
C^{\theta}_b(\mathbb{R}^d;\mathbb{R}^d)$ and
$\| b - b_n\|_{C^{\theta}_b }$ tends to 0 as $n \to \infty$.
Let $(\phi^n_t)$ be the associated flow of smooth diffeomorphisms
which solves the SDE involving $b_n$
and let $(P_t^n)$ be
the corresponding  diffusion semigroup.
The  Bismut-Elworthy-Li formula for $(P_t^n)$ is given by
$$
D_h P_t^n f (x)= \frac{1}{t} \mathbb{E} [ f(\phi_t^n (x)) \,
 \int_0^t \langle (\sigma^{*}a^{-1}) (\phi_u^n(x)) D_h \phi^n_u (x) ,
dW_u \rangle ],\; t>0,\; x \in \mathbb{R}^d, \, n
 \in  \mathbb N.
$$
Note that
$
D_h P_t^n f (x)
 = \mathbb{E} [\langle D
 f (\phi_t^n (x)), D_h\phi_t^n (x) \rangle ]
$. Passing to the limit as $n \to \infty$, using the estimates
\eqref{stability1} and \eqref{stability2}, we get
$$
D_h P_t f (x)
 =  \mathbb{E} [ \langle D f (\phi_t (x)), D_h\phi_t (x) \rangle ]
= \frac{1}{t} \mathbb{E} [ f(\phi_t(x))
 \int_0^t \langle \sigma^{-1} (\phi_u(x)) D_h \phi_u (x) ,
dW_u \rangle ],
$$
for any  $ t>0,\; x \in \mathbb{R}^d .$
\end{proof}

\def\dopo{

\section{A flow in a random Holderian environment}

DA SISTEMARE ???

\label{sec:random} A simple problem which can be solved using the
theory developed here is that of a flow in a quenched Gaussian
vectorfield whose covariance is not smooth.

Consider a centered Gaussian vectorfield $V : \mathbb{R}^d \to
\mathbb{R}^d$ with covariance
\begin{equation}
\label{eq:cov} \mathbb{E}[V^a(x) V^b(y)] = \int_{\mathbb{R}^d}
\frac{\delta_{ab}|k|^2 -k^a k^b}{|k|^2} \frac{e^{i k\cdot (x-y)}
dk}{(1+|k|^2)^{\theta/2}} , \qquad a,b = 1,\dots,d
\end{equation}
where $d < \theta < d+1$. Due to the form of the covariance this
random vector-field does not allow smooth sample-paths. The
following lemma provides the existence of an Holderian (and
solenoidal) version of $V$:

\begin{lemma}
There exists a version of $V$ which belongs to
$C^\theta_{\text{loc}}$ for any $\theta < (\theta-d/2)$, linear
growth and such that
 $div V = 0$ in distributional sense.
\end{lemma}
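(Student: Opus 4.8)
The plan is to produce the regular version from the Kolmogorov--Chentsov continuity theorem, using that for a Gaussian field all moments of increments are controlled by the variance, and to read off the solenoidal property directly from the spectral form of the covariance \eqref{eq:cov}. Throughout, write $\Pi(k)=\mathrm{Id}-kk^{*}/|k|^{2}$ for the orthogonal projection onto $k^{\perp}$, so that \eqref{eq:cov} reads
$$\mathbb{E}[V(x)V(y)^{*}]=\int_{\mathbb{R}^{d}}\Pi(k)\,\frac{e^{ik\cdot(x-y)}}{(1+|k|^{2})^{\theta/2}}\,dk .$$
In particular $V$ is stationary, and since $\mathrm{tr}\,\Pi(k)=d-1$ one gets $\mathbb{E}|V(x)|^{2}=(d-1)\int(1+|k|^{2})^{-\theta/2}dk$, finite because $\theta>d$; thus $V$ is $L^{2}$-bounded, uniformly in $x$.

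First I would estimate increments. Taking the trace,
$$\mathbb{E}|V(x)-V(y)|^{2}=4(d-1)\int_{\mathbb{R}^{d}}\sin^{2}\!\big(\tfrac{1}{2}k\cdot(x-y)\big)\,(1+|k|^{2})^{-\theta/2}\,dk .$$
Splitting this at $|k|=1/|x-y|$ and using $\sin^{2}t\le t^{2}\wedge1$, the low-frequency part is $\lesssim|x-y|^{2}\int_{|k|\le1/|x-y|}|k|^{2}(1+|k|^{2})^{-\theta/2}dk\lesssim|x-y|^{\theta-d}$ because $\theta<d+1$, while the high-frequency part is $\lesssim\int_{|k|>1/|x-y|}(1+|k|^{2})^{-\theta/2}dk\lesssim|x-y|^{\theta-d}$ because $\theta>d$ makes the tail converge. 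Hence $\mathbb{E}|V(x)-V(y)|^{2}\le C|x-y|^{\theta-d}$ for $|x-y|\le1$, with $C$ independent of $x,y$. By Gaussianity, $\mathbb{E}|V(x)-V(y)|^{2p}\le C_{p}|x-y|^{p(\theta-d)}$ for every $p\ge1$, so Kolmogorov--Chentsov (applied on each ball and patched, which is legitimate since the estimate is uniform in the base point) yields a version of $V$ whose paths are locally $\alpha$-Hölder for every $\alpha<(\theta-d)/2$.

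For the linear-growth bound I would work on the unit cubes $Q_{n}=n+[0,1]^{d}$, $n\in\mathbb{Z}^{d}$: the chaining estimate behind the continuity theorem, together with stationarity, gives $\mathbb{E}[\sup_{Q_{n}}|V|]\le C$ uniformly in $n$, and since $\sup_{Q_{n}}|V|$ is the supremum of a Gaussian process it has a Gaussian tail $P(\sup_{Q_{n}}|V|>t)\le Ce^{-ct^{2}}$ with $C,c$ independent of $n$. Choosing a large constant $A$, $\sum_{n}P(\sup_{Q_{n}}|V|>A\sqrt{\log(2+|n|)})<\infty$, and Borel--Cantelli gives $\sup_{Q_{n}}|V|\le A\sqrt{\log(2+|n|)}=o(|n|)$ for all but finitely many $n$, hence $|V(x)|\le C(\omega)(1+|x|)$ almost surely. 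Finally, for $\mathrm{div}\,V=0$: for $\varphi\in C_{c}^{\infty}(\mathbb{R}^{d})$ the variable $\langle\mathrm{div}\,V,\varphi\rangle=-\int V(x)\cdot\nabla\varphi(x)\,dx$ is centered Gaussian with variance proportional to $\int|\widehat{\varphi}(k)|^{2}\,k^{*}\Pi(k)k\,(1+|k|^{2})^{-\theta/2}dk$, which vanishes because $\Pi(k)k=0$; so $\langle\mathrm{div}\,V,\varphi\rangle=0$ a.s., and running this over a countable subset of test functions dense in the $C^{1}$-topology and using continuity in $\varphi$ upgrades it to $\mathrm{div}\,V=0$ in the sense of distributions, almost surely.

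The step I expect to be the main obstacle is not any single estimate but the passage from local to global. The modulus-of-continuity bound $\mathbb{E}|V(x)-V(y)|^{2}\lesssim|x-y|^{\theta-d}$ is genuinely uniform in space only thanks to stationarity, so the splitting of the spectral integral must be carried out with constants independent of the base point; and converting local Hölder regularity into an honest version on all of $\mathbb{R}^{d}$ with at most linear growth forces the cube-by-cube Borel--Cantelli argument with Gaussian tail bounds, rather than a single application of Kolmogorov's theorem on the whole space.
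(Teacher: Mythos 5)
Your argument is correct, and for two of the three assertions it coincides with the paper's proof: the H\"older regularity comes from the same spectral computation of $\mathbb{E}|V(x)-V(y)|^{2}$ (the paper simply asserts $\Phi(r)\sim r^{\theta-d}$ as $r\to0$, where you carry out the frequency splitting explicitly), followed by Gaussian equivalence of moments and Kolmogorov's continuity criterion, giving exponents up to $(\theta-d)/2$ (which is what the statement's exponent should read); and the solenoidal property is in both cases the observation that the projection $\Pi(k)$ annihilates $k$, so the Gaussian variable $\langle V, D\varphi\rangle$ has zero variance — you add the countable-dense-family step to get a single null set, which the paper leaves implicit. Where you genuinely diverge is the growth bound: the paper integrates $\mathbb{E}\int |V(x)|^{p}(1+|x|)^{-\gamma p}\,dx<\infty$ for $\gamma>d/p$ and combines a.s. finiteness of this weighted integral with continuity to get $|V(x)|\le C(\omega)(1+|x|)^{\gamma}$ with $\gamma<1$, whereas you work cube by cube with stationarity, Borell--TIS tails for $\sup_{Q_{n}}|V|$ and Borel--Cantelli. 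Your route needs the heavier Gaussian concentration input but yields the much stronger a.s. bound $|V(x)|\lesssim\sqrt{\log(2+|x|)}$, of which linear growth is a very weak consequence; the paper's weighted-moment trick is softer (no Gaussian machinery beyond moment equivalence) and gives exactly the sublinear polynomial growth needed, which is all the application requires.
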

\begin{proof}
Let us compute
$$
\mathbb{E}[|V(x)-V(y)|^2] = 2 (d-1) \int_{\mathbb{R}^d} \frac{1-e^{i
k\cdot (x-y)} dk}{(1+|k|^2)^{\theta/2}} = \Phi(|x-y|)
$$
and it is not difficult to estimate $\Phi(r) \le C$ and $\Phi(r)
\sim r^{\theta-d} $ for $r\to 0$ . Then by the Gaussian law we have
also
$$
\mathbb{E} [|V(x)-V(y)|^p] \le c_p  [\Phi(|x-y|)]^{p/2} \sim
|x-y|^{p(\theta-d)/2}
$$
for $|x-y| \le 1$ so that by the Kolmogorov lemma we are able to
find a version of $V$ which is locally $\theta$-H\"older continuous
for any exponent $\theta < (\theta-d)/2$. To estimate the pathwise
growth of $V$ note that $ \mathbb{E} \int |V(x)|^p (1+|x|)^{- \gamma
p} dx < \infty $ for any $2\le p < \infty$ and any $\gamma
d/p$. This  and the continuty of $V(x)$ imply that a.s.  the
bound $|V(x)| \le C(1+|x|)^{\gamma}$ holds for some random constant
$C$. Choosing $d/p < 1$ we have that $V$ has at most linear growth.

 To prove that $V$ has a solenoidal version consider
a smooth, compaclty supported test function $\theta$ , then
\begin{equation*}
\begin{split}
 & E \left(\int_{\mathbb{R}^d} \sum_{a=1}^d D_a \theta(x)
  V^a(x) dx\right)^2 =
\int_{\mathbb{R}^d} dx \int_{\mathbb{R}^d} dy \sum_{a,b=1}^d \nabla_a
\theta(x) \nabla_b \theta(y) E [V^a(x)V^b(y)]
\\ & \qquad  =
\int_{\mathbb{R}^d}
\sum_{a,b=1}^d  \frac{\delta_{ab}|k|^2 -k^a k^b}{|k|^2}
\frac{ dk}{(1+|k|^2)^{\theta/2}} k^a k^b |\widehat \theta(k)|^2 = 0
\end{split}
\end{equation*}
where $\widehat \theta$ is the Fourier transform of $\theta$. Then
$\langle D \theta, V\rangle = \langle \theta, div V \rangle= 0$
a.s..
\end{proof}


}

\bigskip

\bigskip

\end{document}